\setlist[enumerate,1]{label={\textnormal{(\arabic*)}}}
\newcommand{\parag}{\bigskip}
\newcommand{\mC}{\mathcal{C}}
\newcommand{\mH}{\mathcal{H}}
\newcommand{\mK}{\mathcal{K}}
\newcommand{\mL}{\mathcal{L}}
\newcommand{\mO}{\mathcal{O}}
\newcommand{\mP}{\mathcal{P}}
\newcommand{\mS}{\mathcal{S}}
\newcommand{\mJ}{\mathcal{J}}
\newcommand{\br}[1]{\left(#1\right)}
\newcommand{\brArg}[1]{\!\br{#1}}
\newcommand{\set}[1]{\left\{#1\right\}}                     % { ... }
\newcommand{\sett}[2]{\left\{#1\,:\,#2\right\}}             % { ... : ... }
\newcommand{\vspan}[1]{\left\langle#1\right\rangle}         % < ... >
\newcommand{\restr}[2]{{#1}\rvert_{#2}}                     % f|A
\newcommand{\NN}{\mathbb{N}}
\newcommand{\NNpos}{\NN\setminus\set{0}}
\newcommand{\NNnot}{\NN\setminus\set{0,1}}
\newcommand{\FF}[1]{\mathbb{F}_{#1}}
\newcommand{\zero}{\bm{0}}
\newcommand{\n}{n}
\newcommand{\pg}[2]{\mathrm{PG}\brArg{#1,#2}}    % \mathrm is necessary for headers and toc and best choice
\newcommand{\code}[3]{\mC_{#1}\brArg{#2,#3}}
\newcommand{\wt}[1]{\mathrm{wt}\brArg{#1}}
\newcommand{\wtInline}[1]{\mathrm{wt}\br{#1}}
\newcommand{\supp}[1]{\mathrm{supp}\brArg{#1}}
\newcommand{\suppp}[2]{\mathrm{supp}_{#1}\brArg{#2}}
\newcommand{\deltta}{\delta}
\newcommand{\Deltta}{\Delta_q}
\newcommand{\proj}[2]{\mathrm{proj}_{#1,#2}}
\newcommand{\projArg}[3]{\proj{#1}{#2}\brArg{#3}}
\newcommand{\prj}[3]{\mathrm{proj}^{(#1)}_{#2,#3}}
\newcommand{\size}[1]{\left|#1\right|}
\newcommand{\floor}[1]{\left\lfloor#1\right\rfloor}
\newcommand{\ceil}[1]{\left\lceil#1\right\rceil}
\renewcommand{\geq}{\geqslant}
\renewcommand{\leq}{\leqslant}
\newtheorem{thm}{Theorem}[section]
\newtheorem{res}[thm]{Result}
\newtheorem{prp}[thm]{Proposition}
\newtheorem{lem}[thm]{Lemma}
\newtheorem{crl}[thm]{Corollary}
\theoremstyle{definition}
\newtheorem{dfn}[thm]{Definition}
\newtheorem{rmk}[thm]{Remark}
\newcommand{\la}{\textnormal{\dn l}}
\newcommand{\mG}{\mathcal G}
\newcommand{\gauss}[2]{\genfrac{[}{]}{0pt}{}{#1}{#2}_q}
\title{Small weight codewords of projective geometric codes II}
\author{Sam Adriaensen \\  {\it Vrije Universiteit Brussel} \and Lins Denaux \\ {\it Ghent University}}
\date{}
\begin{document}

\maketitle

\begin{abstract}
    The \(p\)-ary linear code \(\code{k}{\n}{q}\) is defined as the row space of the incidence matrix \(A\) of \(k\)-spaces and points of \(\pg{\n}{q}\).
    It is known that if \(q\) is square, a codeword of weight \(q^k\sqrt{q}+\mO\brArg{q^{k-1}}\) exists that cannot be written as a linear combination of at most \(\sqrt{q}\) rows of \(A\).
    Over the past few decades, researchers have put a lot of effort towards proving that any codeword of smaller weight \emph{does} meet this property.
    We show that if \(q\geq32\) is a composite prime power, every codeword of \(\code{k}{\n}{q}\) up to weight \(\mO\brArg{q^k\sqrt{q}}\) is a linear combination of at most \(\sqrt{q}\) rows of \(A\).
    We also generalise this result to the codes \(\code{j,k}nq\), which are defined as the \(p\)-ary row span of the incidence matrix of $k$-spaces and $j$-spaces, $j < k$.
\end{abstract}

\noindent {\it Keywords:} linear codes, incidence matrices, projective spaces, small weight codewords.

\noindent {\it Mathematics Subject Classification:} \(05\)B\(25\), \(94\)B\(05\).

\section{Introduction and overview}

In this article, we are interested in a particular class of linear codes, which can be defined as follows.
Consider a prime power $q\coloneqq p^h$, with $p$ prime.
Choose integers $0 \leq j < k < n$.
Let $\pg n q$ denote the $n$-dimensional projective space over $\FF q$, and let $\mJ$ and $\mK$ denote the respective sets of $j$-spaces and $k$-spaces of $\pg n q$.
The incidence matrix of $k$- and $j$-spaces is the matrix $A$ whose rows and columns are indexed by $\mK$ and $\mJ$ respectively, which contains a 1 in positions where the corresponding subspaces are incident, and a 0 in all other positions.
Symbolically,
\begin{align*}
 A \in \{0,1\}^{\mK \times \mJ}, &&
 A(\kappa,\lambda) \coloneqq \begin{cases}
  1 & \text{if } \lambda \subset \kappa, \\
  0 & \text{otherwise.}
 \end{cases}
\end{align*}
The codes we are interested in are the row spaces of these incidence matrices.
These codes consist of vectors whose positions are labelled by the $j$-spaces of $\pg n q$.
It is therefore more convenient to interpret the codewords as functions from $\mJ$ to $\{0,1\}$.

\begin{dfn}
For every $k$-space $\kappa$ of $\pg n q$, define its \emph{characteristic function} with respect to the $j$-spaces as the function
\[
 \kappa^{(j)}: \mJ \to \{0,1\}:
 \lambda \mapsto \begin{cases}
     1 & \text{if } \lambda \subset \kappa, \\
     0 & \text{otherwise.}
 \end{cases}
\]
\end{dfn}

Since $\kappa^{(j)}$ only takes the values $0$ and $1$, we can interpret it as a function from $\mJ$ to any field.
We will study these characteristic functions as functions $\mJ \to \FF p$.
The vector space consisting of all functions from $\mJ$ to $\FF p$ will be denoted by $\FF p^\mJ$.

\begin{dfn} \label{Def:TheCode}
 The code $\code {j,k} n q$ is the vector subspace of $\FF p^\mJ$ generated by the set $\sett{\kappa^{(j)}}{\kappa \in \mK}$ of characteristic functions of the $k$-spaces of $\pg n q$ with respect to the $j$-spaces.
 In case $j=0$, we denote these codes by $\code k n q$.
\end{dfn}

We aim to characterise the small weight codewords of $\code {j,k} n q$.
One way to make codewords of relatively small weight is by taking linear combinations of a small number of characteristic functions.
We will say that a codeword $c \in \code {j,k} n q$ is a ``linear combination of (exactly) $m$ $k$-spaces'' if it can be written as a linear combination of $m$ characteristic functions of $k$-spaces, each occurring in the linear combination with a non-zero scalar.
We remark that the characteristic functions (when seen as $p$-ary functions) are linearly dependent, hence if $c \in \code {j,k} n q$, there is not a \emph{unique} linear combination of characteristic functions of $k$-spaces equal to $c$.

\parag
We will begin with an overview of the known results.
Most of the notation and terminology are standard. Everything will be defined in \Cref{Sec:Prel}.

As a first step, the codewords of minimum weight have been characterised.

\begin{res}[{\autocite{AssmusKey,DelsarteGoethalsMacWilliams} and \autocite[Theorem 1]{BagchiInamdar}}]
    The minimum weight of \(\code{j,k}{\n}{q}\) is \(\gauss{k+1}{j+1}\).
    Every minimum weight codeword is a scalar multiple of the characteristic function of a \(k\)-space.
\end{res}

Stronger characterisations are known.

\subsection{The planar case}

Initially, most attention was paid to the smallest set of parameters, i.e.\ the codes $\code 1 2 q$.
Several results emerged in case \(q=p\) is prime, starting with \citeauthor{McGuireWard} \autocite{McGuireWard}.
They discovered a gap in the weight spectrum by proving that no codeword of \(\code{1}{2}{p}\) has weight \(w\in\set{p+2,\dots,\frac{3}{2}(p+1)}\), \(p\neq2\) \autocite[Corollary \(2.3\)]{McGuireWard}.
\citeauthor{Chouinard:PhD} \autocite[Proposition \(27\)]{Chouinard:PhD} extended this result by showing that no codeword has weight \(w\in\set{p+2,\dots,2p-1}\).

A decade later, \citeauthor{FackFancsaliStormeVandeVoordeWinne} \autocite{FackFancsaliStormeVandeVoordeWinne} generalised this result by proving that if \(p\geq11\), any codeword of \(\code{1}{2}{p}\) of weight smaller than \(\frac{5}{2}p\) is equal to a linear combination of at most two lines.
Add another decade, \citeauthor{Bagchi:FourthWeight} \autocite{Bagchi:FourthWeight} extended this result to all codewords of weight smaller than \(3p-3\), \(p\geq5\).

\parag
Generally, researchers try to prove that any codeword \(c\in\code{1}{2}{q}\) whose weight is upper bounded by some function \(W\brArg{q}\) is a linear combination of exactly \(\ceil{\frac{\wtInline{c}}{q+1}}\) lines, which are relatively few.

In \citeyear{Key}, \citeauthor{Key} \autocite{Key} proved that the characteristic function of a Hermitian variety\footnote{
 For any set $\mS$ of points in $\pg n q$, we can define its characteristic function $v_\mS$ as the function that maps the points of $\mS$ to 1, and the other points of $\pg n q$ to 0.
}
is a codeword of \(\code{\n-1}{\n}{q^2}\), while \citeauthor{BlokhuisBrouwerWilbrink} \autocite{BlokhuisBrouwerWilbrink} showed that any unital \(\mH\) of \(\pg{2}{q^2}\) is a non-singular Hermitian curve if and only if its characteristic function \(v_\mH\) is a codeword of \(\code{1}{2}{q^2}\), or, in other words, if and only if \(v_\mH\) is equal to a \(p\)-ary linear combination of characteristic functions of lines.
One can easily prove that any linear combination of lines equal to \(v_\mH\) must consist of at least \(q^2-q+1\) lines, which is substantially larger than \(\ceil{\frac{\wtInline{v_\mH}}{q^2+1}}=q\) and implies that \(W\brArg{q}\) cannot be larger than \(q\sqrt{q}\) if \(q\) is square.

\citeauthor{Bagchi:OddCodeword} \autocite[Theorem \(5.2\)]{Bagchi:OddCodeword} and \citeauthor{DeBoeckVandendriessche} \autocite[Example \(10.3.4\)]{DeBoeck:PhD}, \autocite[Example \(1.8\)]{DeBoeckVandendriessche} independently discovered a peculiar codeword \(c \in \code{1}{2}{p}\) of weight \(3p-3\) that cannot be written as a linear combination of fewer than \(p-1\) lines.
If \(p>3\), then \(p-1\) is larger than \(\ceil{\frac{\wtInline{c}}{p+1}}\leq3\), implying that \(W\brArg{p}\) is at most \(3p-3\) if \(p\geq5\) is prime.

\parag
Using polynomial methods, \citeauthor{SzonyiWeiner} contributed considerably to the characterisation of small weight codewords of \(\code{1}{2}{q}\) for somewhat larger values of \(q\).

\begin{res}[{\autocite[Theorems \(4.3\), \(4.8\) and Corollary \(4.10\)]{SzonyiWeiner}}]\label{res:SzonyiWeiner}
    Let \(c\) be a codeword of \(\code{1}{2}{q}\), \(q=p^h\), \(p\) prime.
    \begin{itemize}
        \item If \(h=1\), \(p\geq19\) and \(\wt{c}\leq\max\set{3p+1,4p-22}\), then \(c\) is either a linear combination of at most three lines or a certain generalisation of the peculiar codeword described above.
        \item If \(h\geq2\), \(q\geq32\) and
        \[
            \wt{c}<\begin{cases}
                \frac{\br{p-1}\br{p-4}\br{p^2+1}}{2p-1}&\text{if}\ h=2,\\
                \br{\floor{\sqrt{q}}+1}\br{q+1-\floor{\sqrt{q}}}&\text{if}\ h\geq3,
            \end{cases}
        \]
        then \(c\) is a linear combination of exactly \(\ceil{\frac{\wtInline{c}}{q+1}}\) lines.
    \end{itemize}
\end{res}

Hence, if \(q\) is neither small nor prime, the above result characterises all codewords of \(\code{1}{2}{q}\) up to weight \(W\brArg{q}=\mO\brArg{q\sqrt{q}}\).
If \(q\geq32\) and if \(h\geq4\) is even, then the result is sharp, as illustrated by the characteristic functions of the Hermitian curves.

\subsection{The general case}

Consider a codeword \(c\in\code{1}{2}{q}\) and embed \(\pg{2}{q}\) as a plane \(\pi\) in \(\pg{\n}{q}\).
By fixing a \(\br{k-2}\)-space \(\Pi\) disjoint to \(\pi\), one can construct from \(c=\sum_{i} \alpha_i \ell_i^{(0)} \) a codeword \( c' \coloneqq \sum_i \alpha_i \vspan{\ell_i,\Pi}^{(0)} \in \code{k}{\n}{q}\) of weight \(\wt{c}q^{k-1}\) or \(\wt{c}q^{k-1}+\theta_{k-2}\), depending on whether or not \(\Pi\) avoids \(\supp{c}\), or equivalently whether or not $\sum_i \alpha_i = 0$.

Therefore, the observations made in the planar case can be related to the more general case of the codes $\code k n q$.
One commonly tries to prove that any codeword \(c\in\code{k}{\n}{q}\) of weight at most (some function) \(W\brArg{k,\n,q}\) is equal to a linear combination of exactly \(\ceil{\frac{\wtInline{c}}{\theta_k}}\) \(k\)-spaces.
Moreover, \(W\brArg{k,\n,q}\) must be smaller than \(q^k\sqrt{q}+\theta_{k-1}\) if \(q\) is square, and smaller than \(\br{3q-3}q^{k-1}\) if \(q\geq5\) is prime.

\parag
Characterising small weight codewords of \(\code{k}{\n}{q}\), \(\n\geq3\), has gained some popularity in recent years.
We present a short overview based on the survey article of \citeauthor{LavrauwStormeVandeVoorde:LinearCodes} \autocite{LavrauwStormeVandeVoorde:LinearCodes}.

\parag
While this was already utilized in the planar case, \citeauthor{LavrauwStormeVandeVoorde:PointsHyperplanes} \autocite{LavrauwStormeVandeVoorde:PointsHyperplanes,LavrauwStormeVandeVoorde:PointsKspaces} exploited a strong link between codewords of \(\code{k}{\n}{q}\) of small weight and blocking sets.
One year later, \citeauthor{LavrauwStormeSziklaiVandeVoorde} \autocite[Theorem \(12\)]{LavrauwStormeSziklaiVandeVoorde} proved that there exist no codewords in \(\code{k}{\n}{q}\setminus\code{\n-k}{\n}{q}^\perp\), \(p>5\), with weight in the interval \(\left]\theta_k,2q^k\right[\).
As pointed out in \autocite[Theorem \(3.12\)]{LavrauwStormeVandeVoorde:LinearCodes}, using a known lower bound on the minimum weight of \(\code{\n-k}{\n}{q}^\perp\) \autocite[Theorem \(3\)]{BagchiInamdar}, one can show that there exist no codewords of \(\code{k}{\n}{q}\), \(p>5\), having weight in the interval \(\left]\theta_k,2\br{\frac{q^\n-1}{q^{\n-k}-1}\br{1-\frac{1}{p}}+\frac{1}{p}}\right[\).

\parag
By analysing what is known about the codewords in \(\code{k}{\n}{q}\cap\code{\n-k}{\n}{q}^\perp\) and narrowing their view to the cases \(k=\n-1\) and \(q\) prime, \citeauthor{LavrauwStormeSziklaiVandeVoorde} managed to prove that no codewords of \(\code{k}{\n}{q}\), \(p>5\), have weight in the interval \(\left]\theta_k,2q^k\right[\) if \(k=\n-1\) or if \(q\) is prime \autocite[Corollaries \(19\) and \(21\)]{LavrauwStormeSziklaiVandeVoorde}.

\parag
Roughly a decade later, \citeauthor{PolverinoZullo:Codes} characterised all codewords of \(\code{\n-1}{\n}{q}\) up to the second smallest non-zero weight:

\begin{res}[{\autocite[Theorem \(1.4\)]{PolverinoZullo:Codes}}]\label{res:PolverinoZullo:Codes}
    There are no codewords of \(\code{\n-1}{\n}{q}\) with weight in the interval \(\left]\theta_{\n-1},2q^{\n-1}\right[\).
    Any codeword of weight \(2q^{\n-1}\) is a non-zero scalar multiple of the difference of two distinct hyperplanes.
\end{res}

For a shorter and self-contained proof of the above result, see \cite[Theorem 4.4]{Adriaensen}.

\parag
The authors of this paper together with Storme and Weiner \autocite{AdriaensenDenauxStormeWeiner} extended \Cref{res:PolverinoZullo:Codes} by proving that all codewords of \(\code{\n-1}{\n}{q}\) up to weight roughly \(4q^{\n-1}\) are linear combinations of hyperplanes through a fixed \(\br{\n-3}\)-space if \(q\) is large enough, which in turn has been improved slightly in \autocite{Denaux:PhD}.
One year later, we \autocite{AdriaensenDenaux} characterised all codewords of \(\code{k}{\n}{q}\), \(q\) large enough, up to weight roughly \( 3q^{k}\) as being linear combinations of at most two \(k\)-spaces.
In addition, we proved a similar result for the more general family of codes arising from the incidence of \(j\)- and \(k\)-spaces.

\parag
Finally, the second author and Bartoli \autocite{BartoliDenaux} showed that if \(q\) is not prime and large enough, then codewords of \(\code{\n-1}{\n}{q}\) up to weight roughly \(\frac{1}{2^{\n-2}}q^{\n-1}\sqrt{q}\) are linear combinations of exactly \(\ceil{\frac{\wtInline{c}}{\theta_{\n-1}}}\) hyperplanes.
One of the aims of this paper is to remove the exponential factor $\frac 1 {2^{n-2}}$.

\subsection{Outline and main result}

In this paper, we prove the following theorem.

\begin{thm}
 \label{thm:GeneralCase}
 Suppose that \(j,k,\n\in\NN\), \(0\leq j < k < \n \), and let \(q\coloneqq p^h\geq32\) with \(p\) prime and \(h\in\NNnot\).
    Consider a codeword \(c\in\code{j,k}{\n}{q}\) with \(\wt{c}\leq\Deltta \gauss{k+1}{j+1}\), where
    \[
        \Deltta\coloneqq\begin{cases}
            \frac{1}{2}\sqrt{q}-\frac{7}{2}&\text{if}\ h=2,\\
            \floor{\sqrt{q}-\frac{3}{2}}&\text{otherwise}.
        \end{cases}
    \]
    Then \(c\) is a linear combination of exactly \(\ceil{\frac{\wtInline{c}}{\gauss{k+1}{j+1}}}\) \(k\)-spaces.
\end{thm}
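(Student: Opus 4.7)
The overall strategy is to reduce the structural claim about \(\code{j,k}{\n}{q}\) to the planar case \(\code{1}{2}{q}\), where \Cref{res:SzonyiWeiner} of Szőnyi and Weiner already characterises small weight codewords; the \(\sqrt{q}\) scaling of \(\Deltta\) exactly matches the range available in that result.

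First, I would reduce the \(j\) index. Given a \((j-1)\)-space \(\pi\) of \(\pg{\n}{q}\), the \(j\)-spaces through \(\pi\) are in natural bijection with the points of the quotient \(\pg{\n}{q}/\pi\cong\pg{\n-j}{q}\); a characteristic function \(\kappa^{(j)}\) restricts to the characteristic function of the \((k-j)\)-space \(\kappa/\pi\) when \(\pi\subset\kappa\), and to zero otherwise. Hence restricting \(c\in\code{j,k}{\n}{q}\) to the \(j\)-spaces through \(\pi\) produces a codeword \(c_\pi\in\code{k-j}{\n-j}{q}\) of weight at most \(\wt{c}\). Running this construction over every \((j-1)\)-space \(\pi\) contained in some \(\lambda\in\supp{c}\), the theorem will follow once we have established the \(j=0\) case, namely the structural claim for \(\code{K}{N}{q}\) with \(1\leq K<N\), and checked that the local decompositions produced for different \(\pi\) glue into a single global one.

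Next, I would reduce the geometric parameters by induction on \(N-K\) and on \(K\). For \(c'\in\code{K}{N}{q}\), restrict to a carefully chosen hyperplane \(H\): a \(K\)-space contained in \(H\) contributes itself, while one meeting \(H\) in a \((K-1)\)-space contributes that intersection. Since \(\code{K}{N-1}{q}\subseteq\code{K-1}{N-1}{q}\) (each \(K\)-space is an \(\FF{p}\)-combination of its \((K-1)\)-subspaces, a valid identity because \(\theta_{K-1}\equiv 1\pmod p\)), the restriction lies in \(\code{K-1}{N-1}{q}\), and choosing \(H\) to meet \(\supp{c'}\) in a balanced way keeps the weight below the \(\Deltta\gauss{K}{1}\) threshold required by the induction hypothesis. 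Combining the decompositions obtained across a generating family of hyperplanes should recover a global linear combination of \(K\)-spaces of \(\pg{N}{q}\). The induction bottoms out at \(K=1\), where a generic planar section together with a direct application of \Cref{res:SzonyiWeiner} closes the argument.

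The main obstacle, and where the precise numerical shape of \(\Deltta\) — \(\tfrac12\sqrt{q}-\tfrac72\) when \(h=2\) and \(\lfloor\sqrt{q}-\tfrac32\rfloor\) otherwise — is dictated, is twofold. First, the weight of each restricted codeword \(c_\pi\) or \(c'|_H\) must remain below the \(\Deltta\gauss{k'+1}{j'+1}\) threshold of the smaller code so that the inductive hypothesis (or \Cref{res:SzonyiWeiner}) is applicable; tracking this weight accurately through both reductions is what forces the exact constants. Second, the local decompositions obtained on different hyperplanes or quotients have to be reconciled into a single global family of \(k\)-spaces of \(\pg{\n}{q}\); this gluing step demands that any two local descriptions agree on their overlap, and typically relies on a uniqueness argument invoking a lower bound such as \Cref{res:PolverinoZullo:Codes} — one therefore needs \(\Deltta\) to lie comfortably below the second-smallest nonzero weight of the ambient code, which is exactly the numerical slack built into the stated value of \(\Deltta\).
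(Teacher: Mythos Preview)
Your outline has the right high-level shape --- induction down to the planar Sz\H{o}nyi--Weiner result --- but the two reductions you describe are not the ones that actually work, and the step you flag as ``the main obstacle'' (gluing local decompositions) is precisely the step the paper's proof is engineered to \emph{avoid}.

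For the reduction on $j$, you propose to restrict $c$ to the $j$-spaces through each $(j-1)$-space $\pi$, obtaining codewords $c_\pi$ in quotient geometries, and then to glue.  Two problems arise.  First, there is no reason the weight of $c_\pi$ stays below $\Deltta\,\theta_{k-j}$ for \emph{every} $\pi$ in $\suppp{j-1}{c}$; some $\pi$ may lie in many $j$-spaces of $\supp c$.  Second, and more seriously, you give no mechanism for gluing: two quotients $\pg{n}{q}/\pi$ and $\pg{n}{q}/\pi'$ do not overlap in any geometrically useful way, so there is nothing to reconcile on.  The paper instead applies a single global map $\la_{j-1}$ that sends $c$ to \emph{one} codeword $c'\in\code{j-1,k}{n}{q}$ (summing over $j$-spaces through each $(j-1)$-space), bounds $\wt{c'}$ by a double count using \Cref{Res:La}(5), invokes the induction hypothesis on $c'$, and then shows that $c$ minus the resulting linear combination lies in the kernel $\mK_{j,k}(n,q)$, whose minimum weight is proved (via the expander mixing lemma, \Cref{PropMinWtKer}) to exceed $\tfrac12 q\gauss{k+1}{j+1}$.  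No gluing is needed.

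For the reduction on $n$ and $k$ when $j=0$, your hyperplane restriction lands in $\code{K-1}{N-1}{q}$, so the inductive output is a decomposition into $(K-1)$-spaces of $H$, not $K$-spaces; lifting this back to $K$-spaces of $\pg{N}{q}$ is again an unspecified gluing problem.  The paper treats the cases $k=n-1$ and $k\leq n-2$ separately.  For $k=n-1$ it does use hyperplane restriction (legitimate by \Cref{res:Restricted}), but the heart of the argument is the weight-spectrum theorem (\Cref{thm:WeightSpectrumSubspaces}): every subspace is either \emph{thin} or \emph{thick}, which forces the existence of an $(n-2)$-space $\Sigma$ carrying most of the support and appearing in the decomposition of $\restr{c}{\Pi'}$ for every hyperplane $\Pi'\supset\Sigma$; this rigidity replaces gluing.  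For $k\leq n-2$ the paper uses the projection $\proj{R}{\Pi}$ from a hole $R$ rather than restriction, reducing $n$ by one while keeping $k$ fixed.  Neither the thin/thick dichotomy nor the projection map appears in your outline, and without them the inductive scheme does not close.
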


The paper is structured as follows.
In \Cref{Sec:Prel} we give the necessary definitions and background.
In \Cref{Sec:Spectrum} we prove a crucial intermediary result.
This result roughly states that if a point set in $\pg n q$ intersects all spaces of some fixed dimension in either few or many points, then the point set is either small or large.
After this, we are ready to start the proof of \Cref{thm:GeneralCase}.
This is done by using induction on each of the parameters $n$, $k$ and $j$.
In \Cref{Sec:PtHyp} we prove the theorem for the codes $\code {n-1} n q$, in \Cref{Sec:PtK} we prove it for the codes $\code k n q$ and in \Cref{Sec:j&k}, we finish the proof for the general case.

\section{Preliminaries} \label{Sec:Prel}

\subsection{Finite projective geometries}

Throughout this work, we assume that \(\n\in\NNnot\) and that \(q\) is a prime power, i.e.\ \(q\coloneqq p^h\), where \(p\) is prime and \(h\in\NNpos\).
We will mostly consider the case \(h\geq2\).
Finally, we assume that \(j\) and \(k\) are integers satisfying \(0 \leq j < k < n \).

\parag
The Galois field of order \(q\) will be denoted by \(\FF{q}\) and the Desarguesian projective geometry of (projective) dimension \(\n\) over \(\FF{q}\) will be denoted by \(\pg{\n}{q}\).
Whenever `dimension' or `(sub)space' is mentioned, these are implied to be \emph{projective}.
When working in $\pg nq$, we denote the set of $j$-spaces incident with a given subspace $\kappa$ by $\mG_j(\kappa)$.
The set of all $j$-spaces is denoted by $\mG_j$, or $\mG_j(n,q)$ if we want to emphasise the ambient projective geometry.
The number of $k$-spaces through a fixed $j$-space in $\pg nq$ is given by the Gaussian coefficient
\[
    \gauss{n-j}{k-j} \coloneqq \prod_{i=1}^{k-j} \frac{q^{n-k+i}-1}{q^i-1}.
\]
For simplicity's sake, we denote the number of points (or hyperplanes) of \(\pg{\n}{q}\) by \(\theta_{\n}\), i.e.
\[
    \theta_{\n} \coloneqq \gauss{n+1}1 = \frac{q^{\n+1}-1}{q-1}=q^\n+q^{\n-1}+\dots+q+1,
\]
where we settle on the convention that \(\theta_{-1}\coloneqq0\).

\begin{dfn}
 Let $\mS$ be a set of points in $\pg nq$.
 We say that $\mS$ is a \emph{blocking set with respect to the $k$-spaces} or that $\mS$ \emph{blocks all $k$-spaces} if it intersects every $k$-space.
\end{dfn}

A famous result by Bose and Burton gives a lower bound on the size of a blocking set.

\begin{res}[{\cite{boseburton}}]
 \label{Res:BlockingSet}
 If $\mS$ blocks all $k$-spaces, then $|\mS| \geq \theta_{n-k}$, and equality occurs if and only if $\mS$ is an $(n-k)$-space.
\end{res}

\subsection{Codes from projective geometries}

As mentioned in the introduction, we are interested in the codes $\code {j,k} n q$ (see \Cref{Def:TheCode}), whose ambient vector space is $\FF p^{\mG_j}$.
We define the \emph{support} of \(v \in \FF p^{\mG_j} \) to be the set
\[
    \supp v \coloneqq \sett{\lambda \in \mG_j}{v(\lambda) \neq 0}.
\]
More generally, we define, for each $i\in\set{0,1,\dots,j}$, the set
\[
    \suppp i v \coloneqq \sett{ \iota \in \mG_i}{(\exists \, \lambda \in \supp v)(\iota \subseteq \lambda)}.
\]
In case $j=0$, points having value \(0\) with respect to \(v\) are called \emph{holes} with respect to \(v\).
The \emph{weight} \(\wt{v}\) of \(v\) is equal to the size of its support, i.e.\ \(\wt{v}\coloneqq\size{\supp{v}}\).

\begin{prp}\label{prp:Intuition}
    If \(c\in\code{j,k}{\n}{q}\) is a linear combination of exactly \(m \leq\sqrt{q^{j+1}}\) \(k\)-spaces, then
    \begin{enumerate}
        \item \( \displaystyle m = \ceil{ \wt{c} / \gauss{k+1}{j+1}} \),
        \item if $j=0$, every subspace contains either at most \( m \) or at least \(q-m+2\) points of \(\supp{c}\).
    \end{enumerate}
\end{prp}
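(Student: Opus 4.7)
The plan is to write $c = \sum_{i=1}^m \alpha_i \kappa_i^{(j)}$ with the $\kappa_i \in \mK$ distinct and $\alpha_i \in \FF{p} \setminus \{0\}$, and analyse the support directly. For part~(1), the upper bound $\wt{c} \leq m \gauss{k+1}{j+1}$ is immediate from $\supp{c} \subseteq \bigcup_{i=1}^m \supp{\kappa_i^{(j)}}$. To match this, I would prove $\wt{c} > (m-1)\gauss{k+1}{j+1}$, which forces $m = \ceil{\wt{c} / \gauss{k+1}{j+1}}$. For each $j$-space $\lambda$, define the multiplicity $t(\lambda) := |\{i : \lambda \subseteq \kappa_i\}|$: whenever $t(\lambda) = 1$, the value $c(\lambda)$ equals the associated non-zero $\alpha_i$, so $\lambda \in \supp{c}$.

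The main tool is a double count. One has $\sum_\lambda t(\lambda) = m \gauss{k+1}{j+1}$ and $\sum_\lambda \binom{t(\lambda)}{2} = \sum_{i < i'} |\{\lambda : \lambda \subseteq \kappa_i \cap \kappa_{i'}\}| \leq \binom{m}{2} \gauss{k}{j+1}$, the last inequality using $\dim(\kappa_i \cap \kappa_{i'}) \leq k - 1$. Since $t \leq 2\binom{t}{2}$ for $t \geq 2$, subtracting yields $|\{\lambda : t(\lambda) = 1\}| \geq m \gauss{k+1}{j+1} - m(m-1) \gauss{k}{j+1}$. This exceeds $(m-1)\gauss{k+1}{j+1}$ exactly when $m(m-1) \gauss{k}{j+1} < \gauss{k+1}{j+1}$, which via the identity $\gauss{k+1}{j+1}/\gauss{k}{j+1} = (q^{k+1}-1)/(q^{k-j}-1) \geq q^{j+1}$ reduces to $m(m-1) < q^{j+1}$; the hypothesis $m^2 \leq q^{j+1}$ delivers exactly this. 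The main obstacle is this last check, i.e.\ verifying that the crude intersection-dimension bound $\gauss{k}{j+1}$ combined with $m \leq \sqrt{q^{j+1}}$ suffices to push the count of multiplicity-one $j$-spaces past the threshold $(m-1)\gauss{k+1}{j+1}$; everything else is formal.

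For part~(2), I would fix a line $\ell$ of $\pg{n}{q}$ and partition $\{1, \ldots, m\} = A \sqcup B$ with $A = \{i : \ell \subseteq \kappa_i\}$. For $i \in B$, the intersection $\ell \cap \kappa_i$ is a proper subspace of $\ell$, hence at most a single point, so $X := \ell \cap \bigcup_{i \in B} \kappa_i$ satisfies $|X| \leq |B|$; write $\sigma_A := \sum_{i \in A} \alpha_i$. For every $P \in \ell \setminus X$, only the $\kappa_i$ with $i \in A$ contain $P$, so $c(P) = \sigma_A$. Thus, if $\sigma_A = 0$, every point of $\ell \setminus X$ is a hole and $|\ell \cap \supp{c}| \leq |X| \leq m$; whereas if $\sigma_A \neq 0$, then $A \neq \emptyset$, so $|B| \leq m - 1$ and $\ell \setminus X \subseteq \supp{c}$, giving $|\ell \cap \supp{c}| \geq (q + 1) - (m - 1) = q - m + 2$. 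The same dichotomy on $\sigma_A$ handles a subspace $\pi$ of arbitrary dimension $d$ in the $\sigma_A \neq 0$ branch (the bound $\theta_d - (m-1)\theta_{d-1}$ simplifies to $q(q-m+1)\theta_{d-2} + (q-m+2) \geq q - m + 2$ for $m \leq q+1$), so the line case carries most of the content.
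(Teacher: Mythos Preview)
Your argument for part~(1) is correct and essentially identical to the paper's: both bound $\wt c$ below by the number of $j$-spaces lying in exactly one $\kappa_i$, obtain the same expression $m\gauss{k+1}{j+1}-m(m-1)\gauss{k}{j+1}$, and compare it to $(m-1)\gauss{k+1}{j+1}$ via $m(m-1)<q^{j+1}$ and $\gauss{k+1}{j+1}/\gauss{k}{j+1}=(q^{k+1}-1)/(q^{k-j}-1)>q^{j+1}$.

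Part~(2), however, has a genuine gap. Your dichotomy on $\sigma_A$ with $A=\{i:\pi\subseteq\kappa_i\}$ is fine for lines and for the branch $\sigma_A\neq0$ in higher dimension, but the branch $\sigma_A=0$ is not handled when $\dim\pi\geq2$. In particular, as soon as $\dim\pi>k$ (and often also when $\dim\pi\leq k$), one has $A=\emptyset$, so $\sigma_A=0$ automatically; your bound then degenerates to $|\pi\cap\supp c|\leq |B|\,\theta_{d-1}=m\,\theta_{d-1}$, which is far from the required~$m$. The phrase ``the line case carries most of the content'' does not close this: knowing that every line of $\pi$ meets $\supp c$ in at most $m$ or at least $q-m+2$ points does not by itself force $|\pi\cap\supp c|\leq m$ when each line is in the ``at most $m$'' regime (e.g.\ $m+1$ points in general position satisfy the line condition but violate the conclusion), and you have not supplied an argument that such a configuration cannot occur for these particular point sets.

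The paper repairs exactly this by choosing a different index set: instead of $A=\{i:\pi\subseteq\kappa_i\}$ it takes $\sigma$ to be an intersection $\kappa_i\cap\pi$ of \emph{maximal dimension}~$s$ and sets $I=\{i:\sigma\subseteq\kappa_i\}$. This $I$ is never empty. If $s\leq0$ one gets $|\pi\cap\supp c|\leq m$ directly; if $s\geq1$ and $\sum_{i\in I}\alpha_i\neq0$, your $\theta_s-(m-1)\theta_{s-1}\geq q-m+2$ computation applies; and if $\sum_{i\in I}\alpha_i=0$, maximality forces $\kappa_i\cap\pi=\sigma$ for every $i\in I$, so $c$ and $c'\coloneqq\sum_{i\notin I}\alpha_i\kappa_i^{(0)}$ agree on $\pi$, and one finishes by induction on~$m$. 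Replacing your $A$ by this $I$ (and adding the induction) is exactly the missing idea.
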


\begin{proof}
    (1) Since any two $k$-spaces share at most $\gauss k{j+1}$ $j$-spaces, we know that
    \begin{align*}
        m \gauss{k+1}{j+1} \geq \wt c
        & \geq m \left( \gauss{k+1}{j+1} - (m-1) \gauss{k}{j+1} \right) > m \gauss{k+1}{j+1} - q^{j+1} \gauss k{j+1} \\
        & = \left( m - q^{j+1} \frac{q^{k-j}-1}{q^{k+1}-1} \right) \gauss{k+1}{j+1} > (m-1) \gauss{k+1}{j+1}.
    \end{align*}
    
    (2) We prove this statement by induction on $m$.
    Note that it is trivial for $m=0$.
    Now assume that the statement holds for all $m' < m$.
    Suppose that
    \(
        c = \sum_{i=1}^m \alpha_i \kappa_i^{(0)},
    \)
    with all $\kappa_i$ distinct $k$-spaces and $\alpha_i \in \FF p^*$.
    Let $\rho$ be a subspace and let $\sigma$ be an element of $\sett{\kappa_i \cap \rho}{i=1,\dots,m}$ of maximal dimension $s$.
    If $s \leq 0$, then $\rho$ trivially contains at most $m$ points of $\supp c$, so we only need to consider the case where $s \geq 1$.
    Define the set $I \coloneqq \sett{i \in \{1,\dots,m\}}{\sigma \subseteq \kappa_i}$.
    
    First, suppose that
    \(
        \sum_{i \in I} \alpha_i = 0.
    \)
    Then $\supp c \cap \rho = \supp {c'} \cap \rho$, with
    \(
        c' \coloneqq \sum_{i \notin I} \alpha_i \kappa_i^{(0)}.
    \)
    Since $c'$ is a linear combination of less than $m$ $k$-spaces, the statement follows from the induction hypothesis.
    
    Next, suppose that $\sum_{i \in I} \alpha_i \neq 0$.
    Then all points of $\sigma \setminus \bigcup_{i \notin I} \kappa_i$ have the same non-zero coefficient with respect to $c$.
    It follows that
    \[
        \size{\supp c \cap \rho}
        \geq \size{\sigma \setminus \bigcup_{i \notin I} \kappa_i}
        \geq \theta_s - (m-1) \theta_{s-1}
        = (q-m+1) \theta_{s-1} + 1
        \geq q-m+2. \qedhere
    \]
\end{proof}

For any \(i\)-space \(\iota\) of \(\pg{\n}{q}\), we can naturally define the \emph{restriction} of \(v\in\FF{p}^{\mG_j(n,q)}\) to \(\iota\) as the function \(\restr{v}{\iota}\in\FF{p}^{\mG_j(\iota)}\) by restricting the domain of $v$ to $\mG_j(\iota)$.
Using the fact that scalar multiples of the all-one function are codewords of \(\code{\n-1}{\n}{q}\), the following can easily be proved.

\begin{res}[{\autocite[Remark \(3.1\)]{PolverinoZullo:Codes}}]\label{res:Restricted}
    Suppose that \(c\in\code{\n-1}{\n}{q}\) and let \(\iota\) be an \(i\)-space of \(\pg{\n}{q}\).
    Then \(\restr{c}{\iota}\in\code{i-1}{i}{q}\).
\end{res}

The following projection map is originally due to \citeauthor{LavrauwStormeVandeVoorde:PointsKspaces} \autocite[Lemma \(11\)]{LavrauwStormeVandeVoorde:PointsKspaces} and was generalised to arbitrary \(j\)-spaces in \autocite{AdriaensenDenaux}.

\begin{dfn}\label{dfn:Proj}
    Let $R$ be a point and $\Pi \not \ni R$ a hyperplane of $\pg nq$.
    Given $v \in \FF p^{\mG_j(n,q)}$, define
    \[
        \prj j R\Pi (v): \mG_j(\Pi) \to \FF p:
        \lambda \mapsto \sum_{\lambda' \in \mG_j(\vspan{R,\lambda})} v(\lambda').
    \]
    Hence, $\prj j R \Pi$ is a map $\FF p^{\mG_j(n,q)} \to \FF p^{\mG_j(\Pi)}$.
    We will denote $\prj 0 R \Pi$ simply by $\proj R \Pi$.
\end{dfn}

\begin{res}[{\autocite[Lemma \(5.2\)]{AdriaensenDenaux}}]\label{res:Projection}
    Assume that \(k\leq\n-2\) and let \(\br{R,\Pi}\) be a non-incident point-hyperplane pair of \(\pg{\n}{q}\).
    \begin{enumerate}
        \item $\prj j R \Pi$ maps $\code {j,k}nq$ to $\code {j,k}{n-1}q$.
        \item If $R \notin \suppp0c$, then $\wt{\prj j R \Pi (c)} \leq \wt c$.
    \end{enumerate}
\end{res}

\subsection{The expander mixing lemma}

We will introduce a helpful tool for counting problems in finite geometry.
This lemma is situated in algebraic graph theory, but we can use it in the context of finite geometry without having to use graph theory terminology.
As far as we are aware, the earliest occurrence of the expander mixing lemma in the form in which we will use it was in the PhD thesis of Haemers, see e.g.\ the summary article of his thesis \cite[Theorem 5.1]{haemers} (including the paragraph after the proof of Theorem 5.1 for the determination of the relevant eigenvalue).
For a statement of the expander mixing lemma more closely resembling the one that we will use, the reader may consult for instance \cite[Lemma 8]{dewinterschillewaertverstraete}.
Recall that a $2-(v,k,\lambda)$ design is an incidence structure consisting of points and blocks, such that
\begin{itemize}
 \item there are $v$ points,
 \item every block contains $k$ points, and
 \item through any two distinct points, there are exactly $\lambda$ blocks.
\end{itemize}
Every point is contained in the same number of blocks $r$, called the replication number of the design.

\begin{lem}[Expander mixing lemma]
 \label{Lem:Expander}
    Consider a $2-(v,k,\lambda)$ design.
    Let $S$ be a set of points and $T$ be a set of blocks. Denote the number of incidences between $S$ and $T$ by
    \[
        e(S,T) \coloneqq \size{\sett{(P,B) \in S \times T}{ P \in B}}.
    \]
    Then
    \[
        \left| e(S,T) - \frac k v \size{S} \size{T} \right| < \sqrt{ (r-\lambda) \size{S} \size{T}}.
    \]
\end{lem}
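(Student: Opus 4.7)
The plan is to pass to the incidence matrix $N$ of the design: $N$ is the $v \times b$ matrix (with $b$ the number of blocks) whose $(P,B)$-entry is $1$ exactly when $P \in B$. The design axioms immediately give the algebraic identity $NN^T = (r-\lambda)I_v + \lambda J_v$, where $J_v$ is the all-ones $v \times v$ matrix: the diagonal entries count blocks through a single point (that is $r$), and each off-diagonal entry counts blocks through a pair of points (that is $\lambda$). From this identity, $NN^T$ has eigenvalue $rk$ on the span of $\mathbf{1}_v$ (using the standard design relation $r(k-1) = \lambda(v-1)$, which rearranges to $rk = (r-\lambda) + \lambda v$), and eigenvalue $r - \lambda$ on its orthogonal complement. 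Equivalently, the singular values of $N$ are $\sqrt{rk}$ on the line spanned by $\mathbf{1}_v$, and at most $\sqrt{r-\lambda}$ on its orthogonal complement.

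Next, observe that $e(S,T) = \mathbf{1}_S^T N \mathbf{1}_T$, where $\mathbf{1}_S \in \{0,1\}^v$ and $\mathbf{1}_T \in \{0,1\}^b$ are the characteristic vectors of $S$ and $T$. Decompose
\[
 \mathbf{1}_S = \tfrac{\size S}{v} \mathbf{1}_v + s, \qquad \mathbf{1}_T = \tfrac{\size T}{b} \mathbf{1}_b + t,
\]
with $s \perp \mathbf{1}_v$ and $t \perp \mathbf{1}_b$. Expanding the bilinear form and using $N \mathbf{1}_b = r \mathbf{1}_v$ and $N^T \mathbf{1}_v = k \mathbf{1}_b$, the two cross terms involving exactly one all-ones vector vanish, because $\mathbf{1}_v^T t = 0 = s^T \mathbf{1}_v$. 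The all-ones contribution equals $\frac{\size S\,\size T}{vb}\cdot rv = \frac{k}{v}\size S\,\size T$, using the identity $rv = kb$.

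It remains to control the residual term $s^T N t$. Since $t \perp \mathbf{1}_b$, the spectral bound above gives $\|Nt\|_2 \leq \sqrt{r-\lambda}\,\|t\|_2$, and Cauchy--Schwarz then yields $|s^T N t| \leq \sqrt{r-\lambda}\,\|s\|_2 \|t\|_2$. A direct computation gives $\|s\|_2^{\,2} = \size S\br{1 - \size S / v} < \size S$ and similarly $\|t\|_2^{\,2} < \size T$ (strict as soon as $S, T$ are non-empty, which is the only non-trivial case), so combining everything produces exactly the claimed strict bound $\sqrt{(r-\lambda)\size S \size T}$. The only step that needs genuine input beyond linear algebra is the algebraic identity $NN^T = (r-\lambda)I + \lambda J$ together with the resulting spectral gap, and this is the point where one must actually use that the incidence structure is a $2$-design.
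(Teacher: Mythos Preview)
The paper does not supply its own proof of this lemma; it only states it and refers the reader to Haemers' thesis and \cite{dewinterschillewaertverstraete}. Your argument is the standard spectral proof behind those references: compute $NN^T=(r-\lambda)I+\lambda J$, read off the singular values, decompose $\mathbf 1_S$ and $\mathbf 1_T$ into their all-ones component plus a perpendicular remainder, and bound the cross term by Cauchy--Schwarz. One small point worth making explicit: when you bound $\|Nt\|_2$, you are implicitly using that $N^TN$ has the same nonzero eigenvalues as $NN^T$ and that its top eigenvector is $\mathbf 1_b$ (from $N^TN\mathbf 1_b=rk\,\mathbf 1_b$), so that on $\mathbf 1_b^\perp$ the operator norm is $\sqrt{r-\lambda}$; this is the step that actually justifies ``the spectral bound above gives $\|Nt\|_2\le\sqrt{r-\lambda}\,\|t\|_2$.'' Otherwise the proof is correct and complete, and in fact provides more than the paper itself does.
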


Remark that if the design consists of the points and $j$-spaces of $\pg nq$, then
\[
 r - \lambda = \gauss n j - \gauss{n-1}{j-1} = q^j \gauss{n-1}j,
\]
hence, in this case, the expander mixing lemma tells us that
\begin{align}
 \label{Eq:EML}
 \left| e(S,T) - \frac {\theta_j}{\theta_n} \size{S} \size{T} \right| < \sqrt{ q^j \gauss{n-1}j \size{S} \size{T}}.
\end{align}

\section{Amplifying a gap in the intersection sizes with subspaces}
 \label{Sec:Spectrum}

In this section, we show that if a point set intersects every $r$-space in either a few or many points, the same should be true for all higher-dimensional subspaces.

\begin{thm}\label{thm:WeightSpectrumSubspaces}
    Consider a prime power $q \geq 16$ and integers $r, n, \delta$ satisfying $1 \leq r < n$ and $\delta \leq \sqrt q - 1$.
    Suppose that $S$ is a set of points in $\pg nq$ intersecting every $r$-space in either
    \begin{align*}
        \text{at most $\delta$ points}  && \text{or} &&  \text{at least $q-\sqrt q +3$ points.}
    \end{align*}
    Then 
    \begin{align*}
        \size{S} \leq \delta \theta_{n-r} && \text{or} && \size{S} > \left( q - \sqrt q + \frac 32 \right) \frac{q^n-1}{q^r-1}.
    \end{align*}
\end{thm}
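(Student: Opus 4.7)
The plan is to obtain the stated dichotomy by double-applying the expander mixing lemma \eqref{Eq:EML} to incidences between $S$ and each of the two types of $r$-spaces permitted by the hypothesis. Set $s \coloneqq |S|$ and $\alpha \coloneqq (\theta_r/\theta_n)s$, and partition $\mG_r(n,q)=T\sqcup U$, where $T$ (resp.\ $U$) consists of $r$-spaces meeting $S$ in at least $q-\sqrt q+3$ (resp.\ at most $\delta$) points. First, combining $e(S,T)\ge (q-\sqrt q+3)|T|$ with the upper bound in \eqref{Eq:EML} gives, whenever $\alpha<q-\sqrt q+3$,
\[
    |T|<\frac{q^r\gauss{n-1}{r}s}{(q-\sqrt q+3-\alpha)^2}.
\]
The mirror-image application, using $e(S,U)\le \delta|U|$ together with the lower bound in \eqref{Eq:EML}, yields, whenever $\alpha>\delta$,
\[
    |U|<\frac{q^r\gauss{n-1}{r}s}{(\alpha-\delta)^2}.
\]

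Next, I would sum these bounds, invoke $|T|+|U|=\gauss{n+1}{r+1}$, substitute $s=\alpha\theta_n/\theta_r$, and cancel the common factor via $\gauss{n+1}{r+1}\theta_r/\theta_n=\gauss{n}{r}=\frac{q^n-1}{q^{n-r}-1}\gauss{n-1}{r}$. This collapses the two-sided bound to a single one-variable condition,
\[
    \frac{q^n-1}{q^n-q^r}\le \alpha\left[\frac{1}{(\alpha-\delta)^2}+\frac{1}{(q-\sqrt q+3-\alpha)^2}\right],
\]
whose left-hand side is always at least $1$. A direct convexity analysis of the right-hand side shows that, for $q\ge 16$, it falls below $1$ throughout the middle of $(\delta,q-\sqrt q+3)$, so the inequality can hold only when $\alpha$ hugs one of the two endpoints. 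In the regime $\alpha\le \delta$, the elementary identity $\theta_r\theta_{n-r}-\theta_n=q(q^r-1)(q^{n-r}-1)/(q-1)^2>0$ implies $s\le \delta\theta_n/\theta_r<\delta\theta_{n-r}$, giving the first conclusion. In the regime $\alpha\ge q-\sqrt q+3$, the companion identity $\theta_{n-1}\theta_r-\theta_n\theta_{r-1}=q^r\theta_{n-r-1}>0$ turns $s\ge (q-\sqrt q+3)\theta_n/\theta_r$ into the target bound $s>(q-\sqrt q+3/2)(q^n-1)/(q^r-1)$.

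The most delicate point, and the one I expect to be the principal obstacle, is pinning $\alpha$ sufficiently tightly to the two endpoints. A naive analysis of the above one-variable inequality only traps $\alpha$ within $O(\sqrt{\alpha})$ of an endpoint, whereas a clean conversion into the stated bounds demands a tolerance closer to $O(\delta/q)$. I anticipate bridging this gap will require one of three refinements: a second application of \eqref{Eq:EML} to the pair $(S^c,T)$ (especially potent for small $r$, where every $r$-space in $T$ carries very few points of $S^c$); a variance-based strengthening exploiting the explicit identity $\mathrm{Var}_{\Lambda\in \mG_r}(|S\cap\Lambda|)=\alpha(\theta_r-\alpha)(q^n-q^r)/(\theta_r(q^n-1))$ obtained by standard second-moment computation; or an induction on $n$ via a hyperplane-section argument paired with double counting over the hyperplanes of $\pg{n}{q}$, recycling the statement at dimension $n-1$.
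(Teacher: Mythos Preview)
Your two-sided expander mixing lemma attack (and the variance identity you list as a refinement) is essentially the paper's Lemma~3.1: both methods are equivalent to plugging the gap hypothesis into the second-moment identity $\sum_i (i-(\sqrt q -1))(i-(q-\sqrt q+3)) n_i \ge 0$, and both yield only the \emph{bulk} dichotomy
\[
 s < \br{\sqrt q - \tfrac 12}\frac{q^n-1}{q^r-1}
 \qquad\text{or}\qquad
 s > \br{q-\sqrt q + \tfrac32}\frac{q^n-1}{q^r-1}.
\]
You correctly diagnose the residual obstacle: one must still rule out the interval $\delta\theta_{n-r} < s < (\sqrt q - \tfrac12)\frac{q^n-1}{q^r-1}$, and your one-variable inequality cannot do this because near the left endpoint the bound on $|U|$ degenerates (the denominator $(\alpha-\delta)^2$ collapses). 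Of your three suggested fixes, the first two do not help --- applying \eqref{Eq:EML} to $(S^c,T)$ replaces the factor $s$ by $\theta_n - s$, which is worse in the small-$s$ regime, and the variance identity is exactly the standard equations you already used.

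The third suggestion (induction on $n$) is the right one, but the decisive ingredient you are missing is a \emph{blocking set} lower bound. The paper's argument runs as follows: by induction every proper subspace of dimension $\ge r$ is either ``poor'' ($\le \delta\theta_{i-r}$ points of $S$) or ``rich'' ($> (q-\sqrt q + \tfrac32)\frac{q^i-1}{q^r-1}$). If $s > \delta\theta_{n-r}$, a simple count shows every $(r-1)$-space lies in a rich $r$-space, and one then propagates richness upward dimension by dimension to conclude that every $(r-1)$-space lies in a rich \emph{hyperplane}. By Bose--Burton (\Cref{Res:BlockingSet}) this forces at least $\theta_r$ rich hyperplanes. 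Now a single application of \eqref{Eq:EML} to $S$ and the set $T$ of rich hyperplanes --- not to $r$-spaces --- gives $|T| < \frac{\sqrt q}{(\sqrt q - 2)^2}q^r$, contradicting $|T| \ge \theta_r$ for $q \ge 16$. The switch from $r$-spaces to hyperplanes is what makes the EML bound sharp enough; applying it only at the level of $r$-spaces, as in your main line, cannot distinguish $\delta$ from $\sqrt q$.
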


We will prove this theorem throughout this section.
The main tools are two useful counting techniques.
The first one is sometimes referred to as the standard equations, the second one is the expander mixing lemma.
Although they usually yield the same result in the context of finite geometric counting problems, we will use them here to complement each other.

We make the following conventions for the remainder of this section:
\begin{itemize}
    \item $q \geq 16$,
    \item $1 \leq r < n$ are integers,
    \item $\delta$ is an integer satisfying $\delta \leq \sqrt q - 1$,
    \item $S$ is a set of points in $\pg nq$ intersecting every $r$-space in either at most $\delta$ points or at least $q-\sqrt q+3$ points, and
    \item $s \coloneqq \size{S}$.
\end{itemize}

\begin{lem}
    \label{Lem:ForbiddenSpectrumBulk}
    Either $s < \displaystyle \br{\sqrt q - \frac 12} \frac{q^n-1}{q^r-1}$ or $s > \displaystyle \left( q - \sqrt q + \frac 32 \right) \frac{q^n-1}{q^r-1}$.
\end{lem}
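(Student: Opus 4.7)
The plan is to combine the standard counting equations of the $2$-design of points and $r$-spaces in $\pg{n}{q}$ with a carefully chosen quadratic functional. For each integer $i \geq 0$, let $n_i$ denote the number of $r$-spaces meeting $S$ in exactly $i$ points, so the hypothesis reads $n_i = 0$ whenever $\delta < i < q - \sqrt{q} + 3$. This condition only weakens if $\delta$ is replaced by a larger number, so I may work with $\delta = \sqrt{q} - 1$ throughout (treated as a real parameter, not necessarily an integer). Put $c \coloneqq q - \sqrt{q} + 3$.

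First I would introduce
\[
    F \coloneqq \sum_i (i - \delta)(c - i)\, n_i.
\]
For every $i$ with $n_i > 0$ we have $i \leq \delta$ or $i \geq c$, so $(i-\delta)(c-i) \leq 0$ and hence $F \leq 0$. Expanding $F$ through the three standard equations
\begin{align*}
    \sum_i n_i &= \gauss{n+1}{r+1}, &
    \sum_i i\, n_i &= s\gauss{n}{r}, &
    \sum_i i(i-1)\, n_i &= s(s-1)\gauss{n-1}{r-1},
\end{align*}
and abbreviating $X \coloneqq \gauss{n-1}{r-1}$, $Y \coloneqq \gauss{n}{r}$, $Z \coloneqq \gauss{n+1}{r+1}$, the inequality $F \leq 0$ rearranges to the quadratic bound
\[
    s^2 X - s\bigl[X + Y(c + \delta - 1)\bigr] + c\delta Z \geq 0.
\]
The aim is to show that this bound \emph{fails} whenever $s$ lies in the forbidden interval $[(\sqrt{q} - 1/2)\alpha,\, (q - \sqrt{q} + 3/2)\alpha]$, where $\alpha \coloneqq Y/X = (q^n - 1)/(q^r - 1)$, yielding the required contradiction.

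Writing $s = \mu \alpha$ and introducing $\alpha' \coloneqq Z/Y = (q^{n+1} - 1)/(q^{r+1} - 1)$, the left-hand side of the displayed quadratic factorises as
\[
    X \alpha\, \bigl[\alpha\, g(\mu) - c\delta(\alpha - \alpha') - \mu\bigr], \qquad g(\mu) \coloneqq \mu^2 - (c + \delta - 1)\mu + c\delta.
\]
A short check confirms $\alpha \geq \alpha'$, so the correction term $-c\delta(\alpha - \alpha')$ is non-positive, as is $-\mu$. Hence it suffices to prove $g(\mu) \leq 0$ throughout $[\sqrt{q} - 1/2,\, q - \sqrt{q} + 3/2]$.

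With $\delta = \sqrt{q} - 1$ one computes $c + \delta - 1 = q + 1$, so the axis of symmetry of the upward-opening parabola $g$ is $(q+1)/2$, which is exactly the midpoint of the forbidden range. Since $g$ is concave up, its maximum on a closed interval is attained at an endpoint, and by this symmetry the two endpoint values coincide; a direct calculation produces
\[
    g\bigl(\sqrt{q} - 1/2\bigr) = g\bigl(q - \sqrt{q} + 3/2\bigr) = -\tfrac{1}{2}\bigl((\sqrt{q} - 2)^2 + \tfrac{1}{2}\bigr),
\]
which is strictly negative for every $q$. The main technical point is identifying the right weight $f(i) = (i - \delta)(c - i)$; once this is in place, the pleasant feature of the proof is that the two endpoints of the forbidden range are symmetric about the axis of $g$, so the contradiction falls out from a single arithmetic check, with only mild bookkeeping of the small gap $\alpha - \alpha'$.
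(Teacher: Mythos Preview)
Your argument is correct and is essentially the paper's proof: you use the same quadratic functional $\sum_i (i-\delta)(i-c)n_i$ with $\delta=\sqrt q-1$, $c=q-\sqrt q+3$, the same three standard equations, and the same estimate $\alpha'<\alpha$ to drop the lower-order term. The only cosmetic difference is in the analysis of the resulting quadratic: the paper bounds the discriminant below by $(q-2\sqrt q+2)^2$ to locate the roots, whereas you substitute $s=\mu\alpha$ and evaluate $g$ at the two endpoints directly, exploiting that they are symmetric about the vertex. Both computations are equivalent and yield the same forbidden interval.
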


\begin{proof}
    We use the standard equations.
    Let $n_i$ denote the number of $r$-spaces intersecting $S$ in exactly $i$ points.
    It follows from our assumptions that $S$ intersects every $r$-space in at most $\sqrt q - 1$ points or in at least $q-\sqrt q+3$ points.
    Hence,
    \begin{equation}
        \label{Eq:StandardEq}
        \sum_i \left(i-\br{\sqrt q - 1}\right) \left(i-\left(q-\sqrt q + 3\right)\right) n_i \geq 0.
    \end{equation}
    On the other hand, we know that
    \begin{align*}
        \sum_i n_i &= \gauss{n+1}{r+1} = \frac{q^{n+1}-1}{q^{r+1}-1} \frac{q^n-1}{q^r-1} \gauss{n-1}{r-1}, \\
        \sum_i i n_i &= s \gauss{n}{r} = s \frac{q^n-1}{q^{r}-1} \gauss{n-1}{r-1}, \\
        \sum_i i(i-1) n_i &= s(s-1)\gauss{n-1}{r-1}.
    \end{align*}
    The first equation should be clear.
    The second and third equations follow from performing a double count on the following sets, respectively:
    \begin{align*}
        \sett{(P,\rho) \in S \times \mG_{r}}{P \in \rho}, &&
        \sett{(P,R,\rho) \in S \times S \times \mG_{r}}{P \neq R, \, P,R \in \rho}.
    \end{align*}
    Plugging this into \Cref{Eq:StandardEq} and dividing by $\gauss{n-1}{r-1}$ yields
    \begin{align*}
        0 \leq s(s-1) - (q+1) \frac{q^n-1}{q^{r}-1} s + \br{\sqrt q - 1} (q-\sqrt q+3) \frac{q^{n+1}-1}{q^{r+1}-1} \frac{q^n-1}{q^{r}-1}.
    \end{align*}
    Since $s \geq 0$ and $\frac{q^{n+1}-1}{q^{r+1}-1} < \frac{q^n-1}{q^{r}-1}$, we obtain
    \begin{equation}
        \label{Eq:StandardEq2}
        0 < s^2 -  (q+1) \frac{q^n-1}{q^{r}-1} s + \br{\sqrt q - 1} (q-\sqrt q+3) \left( \frac{q^n-1}{q^{r}-1} \right)^2.
    \end{equation}
    The right-hand side of \Cref{Eq:StandardEq2} is a quadratic polynomial in $s$.
    We will give estimates of its roots.
    The discriminant of the polynomial is given by
    \[
        \left( \frac{q^n-1}{q^{r}-1} \right)^2 \left( (q+1)^2 - 4 \br{\sqrt q - 1} (q-\sqrt q+3) \right)
        \geq \left( \frac{q^n-1}{q^{r}-1} (q-2\sqrt q + 2) \right)^2.
    \]
    Therefore, \Cref{Eq:StandardEq2} does not hold when
    \begin{align*}
        s &\in \left[ \frac 12 \frac{q^n-1}{q^{r}-1}( (q+1) - (q-2\sqrt q+2) ), \frac 12 \frac{q^n-1}{q^{r}-1}( (q+1) + (q-2\sqrt q+2) ) \right] \\
        & = \left[ \br{\sqrt q - \frac 12} \frac{q^n-1}{q^{r}-1}, \left( q - \sqrt q + \frac 32 \right) \frac{q^n-1}{q^{r}-1} \right]. \qedhere
    \end{align*}
\end{proof}

It only remains to exclude the case
\[
    \delta \theta_{\n-r} < s < \br{\sqrt q - \frac 12} \frac{q^n-1}{q^{r}-1}.
\]
We will do this by fixing $r$ and using induction on $n$.
Call an $(r+i)$-space $\rho$ \emph{poor} if $\size{\rho \cap S} \leq \delta \theta_i$, and \emph{rich} if $\size{\rho \cap S} \geq \br{q-\sqrt q + \frac 3 2} \frac{q^{r+i}-1}{q^r-1}$.
It follows from our assumptions and the induction hypothesis that, for every $i\in\set{0,1,\dots,n-r-1}$, every $(r+i)$-space $\rho$ is either poor or rich.

\parag
Let $T$ denote the set of rich hyperplanes and define $t \coloneqq \size{T}$.

\begin{lem}
    \label{Lem:LowerBoundOnT}
    Suppose that $s > \delta \theta_{n-r}$.
    Then $t \geq \theta_{r}$.
\end{lem}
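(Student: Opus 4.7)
The plan is to show that the set $T$ of rich hyperplanes \emph{covers} every $(r-1)$-space of $\pg n q$, meaning each $(r-1)$-space is contained in at least one member of $T$. Once this is established, the standard point--hyperplane duality of $\pg n q$ identifies $T$ with a set of $t$ points in the dual space that meets every $(n-r)$-space, and \Cref{Res:BlockingSet} immediately yields $t \geq \theta_{n-(n-r)} = \theta_r$.

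To prove the covering property, I will fix an arbitrary $(r-1)$-space $\pi$ and argue by contradiction: assume every one of the $\theta_{n-r}$ hyperplanes through $\pi$ is poor. A double count of incidences between $S$ and these hyperplanes, obtained by splitting the points of $S$ according to whether they lie in $\pi$, produces the identity
\[
    \sum_{\Pi \supseteq \pi} |\Pi \cap S| = s \theta_{n-r-1} + |S \cap \pi| \, q^{n-r},
\]
which records that each point of $S \cap \pi$ lies in all $\theta_{n-r}$ hyperplanes through $\pi$, while each point of $S \setminus \pi$ lies in exactly $\theta_{n-r-1}$ of them. The poorness assumption upper-bounds the same sum by $\theta_{n-r} \cdot \delta \theta_{n-r-1}$, and rearranging the resulting inequality should give $s \leq \delta \theta_{n-r}$, in direct contradiction with the standing hypothesis $s > \delta \theta_{n-r}$. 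Hence some hyperplane through $\pi$ is not poor; invoking the rich/poor dichotomy for hyperplanes -- which is precisely what the induction hypothesis of \Cref{thm:WeightSpectrumSubspaces} supplies -- this hyperplane must be rich.

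I do not foresee a serious obstacle: the combinatorial identity above is elementary, the rearrangement is a one-line manipulation, and the dual Bose--Burton step is routine. The main bookkeeping point is that upgrading \emph{not poor} to \emph{rich} relies on \Cref{thm:WeightSpectrumSubspaces} in $\pg{n-1}{q}$, so this lemma genuinely lives inside the inductive proof of the main theorem rather than standing alone.
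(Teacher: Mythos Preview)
Your proof is correct and reaches the same endgame (duality plus Bose--Burton), but the path differs from the paper's. The paper argues in two stages: first, a simple count through the $r$-spaces containing the fixed $(r-1)$-space $\rho$ shows that $\rho$ lies in some \emph{rich $r$-space}; second, an inner induction on $i$ proves that every rich $(r+i)$-space sits inside a rich $(r+i+1)$-space, propagating richness step by step up to a hyperplane. Your double count jumps directly to hyperplanes, exploiting that the poor bound for a hyperplane is $\delta\theta_{n-r-1}$ and that this $\theta_{n-r-1}$ factor cancels cleanly against the number of hyperplanes through $\pi$ containing a fixed point outside $\pi$, leaving exactly $s \leq \delta\theta_{n-r}$. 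This is shorter and avoids the intermediate induction entirely. The paper's route, by contrast, establishes the extra structural fact that rich subspaces extend at every dimension---information not needed for this lemma but arguably of independent interest. Both approaches rely on the rich/poor dichotomy for hyperplanes supplied by the outer induction on $n$, and you correctly flag that dependence.
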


\begin{proof}
    Let $\rho$ be an $(r-1)$-space.
    We will prove that $\rho$ lies in a rich hyperplane.
    If $\rho$ only lies in poor $r$-spaces, then
    \[
        s \leq \size{\rho \cap S} + \theta_{n-r} (\delta - \size{\rho \cap S}) \leq \delta \theta_{n-r},
    \]
    contradicting our assumptions.
    Thus, $\rho$ lies in a rich $r$-space.
    
    Now we prove, for every $i \in \set{0,1,\dots,n-r-2}$, that a rich $(r+i)$-space lies in a rich $(r+i+1)$-space.
    Then it follows by induction that $\rho$ lies in a rich hyperplane.
    So suppose, to the contrary, that $\sigma$ is a rich $(r+i)$-space lying only in poor $(r+i+1)$-spaces.
    Then
    \begin{align*}
        \delta \theta_{n-r} 
        < s &\leq \size{\sigma \cap S} + \theta_{n-r-i-1} (\delta \theta_{i+1} - \size{\sigma \cap S}) \\
        &< \delta \theta_{n-r-i-1} \theta_{i+1} - q \theta_{n-r-i-2} \left(q-\sqrt q + \frac 3 2 \right) q^i.
    \end{align*}
    After multiplying both sides by $(q-1)^2$ and moving some terms around, we obtain
    \begin{align*}
        q \left( q^{n-r-i-1}-1 \right) & \left(q-\sqrt q + \frac 3 2 \right) q^i (q-1) \\
        & < \delta \left[ \left( q^{n-r-i}-1 \right) \left( q^{i+2}-1 \right) - \left(q^{n-r+1}-1 \right) (q-1) \right] \\
        & = \delta \left( q^{n-r+1} - q^{n-r-i} - q^{i+2} + q \right) \\
        & = \delta q \left( q^{i+1}-1 \right) \left( q^{n-r-i-1} - 1 \right).
    \end{align*}
    It follows that
    \begin{align*}
        \sqrt q - 1
        & \geq \delta > \frac{\left(q-\sqrt q + \frac 3 2 \right) q^i (q-1)}{q^{i+1}-1} > \left(q-\sqrt q + \frac 3 2 \right) \left( 1 - \frac 1 q \right).
    \end{align*}
    This yields a contradiction for $q \geq 2$.
    
    Thus, we may conclude that every $(r-1)$-space $\rho$ lies in a rich hyperplane.
    This means that any duality of the projective space maps $T$ to a blocking set of the $(n-r)$-spaces.
    Therefore, by \Cref{Res:BlockingSet}, there are at least $\theta_{r}$ rich hyperplanes.
\end{proof}

\begin{lem}
    \label{Lem:UpperBoundOnT}
    Suppose that $s \leq \br{\sqrt q - \frac 12} \frac{q^n-1}{q^{r}-1}$.
    Then
    \[
        t < \frac{\sqrt q}{\left( \sqrt q - 2 \right)^2} q^r.
    \]
\end{lem}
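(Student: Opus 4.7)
The plan is to apply the expander mixing lemma \eqref{Eq:EML} in the \(2\)-design of points and hyperplanes of \(\pg nq\) (so with \(j = n-1\)), taking \(T\) to be the set of rich hyperplanes. Since \(\gauss{n-1}{n-1} = 1\), this design yields
\[
\left|e(S,T) - \frac{\theta_{n-1}}{\theta_n}st\right| < \sqrt{q^{n-1}\,st}.
\]
On the other hand, by the definition of rich, every \(\pi \in T\) contains at least \(\bigl(q - \sqrt q + \tfrac{3}{2}\bigr)\frac{q^{n-1}-1}{q^r - 1}\) points of \(S\). Summing over \(T\) gives a lower bound on \(e(S,T)\), and combining with the EML upper bound yields
\[
t\Delta < \sqrt{q^{n-1}st},
\quad\text{where}\quad
\Delta \coloneqq \bigl(q - \sqrt q + \tfrac{3}{2}\bigr)\frac{q^{n-1}-1}{q^r-1} - \frac{\theta_{n-1}}{\theta_n}s.
\]
Provided \(\Delta > 0\), squaring gives the key estimate \(t < q^{n-1}s/\Delta^2\), after which I will substitute the upper bound on \(s\).

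To lower bound \(\Delta\), I would first rewrite \(q^{n-1} - 1 = \tfrac{q^n-1}{q} - \tfrac{q-1}{q}\) and use \(\theta_{n-1}/\theta_n = (q^n-1)/(q^{n+1}-1) < 1/q\) together with the hypothesis \(s \leq \bigl(\sqrt q - \tfrac{1}{2}\bigr)\frac{q^n - 1}{q^r-1}\). This produces
\[
\Delta \;>\; \frac{q - 2\sqrt q + 2}{q}\cdot\frac{q^n-1}{q^r-1} - \frac{\bigl(q - \sqrt q + \tfrac{3}{2}\bigr)(q-1)}{q(q^r - 1)}.
\]
For \(q\geq 16\) and \(n \geq r+1 \geq 2\), the first term is at least \(\tfrac{q - 2\sqrt q + 2}{q}\cdot q = q - 2\sqrt q + 2 \geq 10\) while the subtracted error is bounded by a constant of size \(q/(q^r-1)\), so that the ratio of error to main term is at most some small \(\rho = \rho(q,n,r) \leq \tfrac{q-\sqrt q+3/2}{(q - 2\sqrt q + 2)\,q^{n-1}}\). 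Hence I can absorb the error as
\[
\Delta \;>\; (1-\rho)\,\frac{(q - 2\sqrt q + 2)(q^n-1)}{q(q^r - 1)}.
\]

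Substituting \(s \leq \bigl(\sqrt q - \tfrac{1}{2}\bigr)\frac{q^n-1}{q^r-1}\) into \(t < q^{n-1}s/\Delta^2\), telescoping the factors of \(q^n-1\) and \(q^r - 1\), and bounding \(q^{n-1}(q^r - 1)/(q^n - 1) \leq q^{r-1}\), the inequality reduces to verifying
\[
\bigl(\sqrt q - \tfrac{1}{2}\bigr)\,\sqrt q\,(\sqrt q - 2)^2 \;<\; (1-\rho)^2\,(q - 2\sqrt q + 2)^2,
\]
which can be checked numerically for \(q = 16\) (the tightest case) and verified asymptotically for all larger \(q\) since both \(\rho \to 0\) and the ratio of the two sides tends to \(1\) from below.

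The main obstacle is the numerical tightness: at leading order in \(q\), the left and right sides of the final inequality agree, so all approximations must be of lower order, and in particular one must keep track of the \(-\tfrac{1}{2}\) in the bound on \(s\) and the \(+\tfrac{3}{2}\) in the richness threshold. The rest of the work is essentially bookkeeping of these lower-order corrections to ensure that the constant \(\sqrt q/(\sqrt q - 2)^2\) in the conclusion is attained.
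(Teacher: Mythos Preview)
Your approach is essentially the same as the paper's: apply the expander mixing lemma to points and hyperplanes, lower-bound $e(S,T)$ via the richness threshold, check that the resulting quantity $\Delta$ is positive, square, and then estimate. The only difference is in how the final algebra is organised. You keep the lower-order constants $-\tfrac12$ and $+\tfrac32$ alive via a correction factor $\rho$ and end with an inequality to be checked numerically at $q=16$ and asymptotically for larger $q$. The paper instead immediately relaxes to the cruder bounds
\[
q^{n-1}\bigl(\sqrt q - \tfrac12\bigr)(q^n-1)(q^r-1) \;<\; \sqrt q\,q^{2n+r-1}
\quad\text{and}\quad
\Bigl[(q-\sqrt q+\tfrac32)(q^{n-1}-1)-\tfrac{q^n-1}{q^{n+1}-1}(\sqrt q-\tfrac12)(q^n-1)\Bigr]^2 \;>\; \bigl((q-2\sqrt q)q^{n-1}\bigr)^2,
\]
which factor as $(q-2\sqrt q)^2 = q(\sqrt q-2)^2$ and give the stated bound directly, with no case analysis or numerical verification needed. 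Your route is correct but slightly more laborious; the paper's cruder estimates buy a cleaner finish.
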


\begin{proof}
    Recall that $e(S,T)$ denotes the number of incidences between the set $S$ of points and the set $T$ of rich hyperplanes.
    By the definition of rich, we have that
    \[
      e(S,T) \geq t \br{q-\sqrt q + \frac 3 2} \frac{q^{n-1}-1}{q^r-1}.
    \]
    On the other hand, by applying the expander mixing lemma to $S$ and $T$, we have that
    \[
        e(S,T)
        \leq \frac{\theta_{n-1}}{\theta_n} st + \sqrt{q^{n-1}st}.
    \]
    Hence,
    \begin{equation}
        \label{Eq:EmlIneq}
        t \left( \br{q-\sqrt q + \frac 3 2} \frac{q^{n-1}-1}{q^r-1} - \frac{\theta_{n-1}}{\theta_n} s \right) \leq \sqrt{q^{n-1} s t}.
    \end{equation}
    Next, we verify that the left-hand side of \Cref{Eq:EmlIneq} is non-negative.
    This follows from
    \begin{align*}
     s \leq \br{\sqrt q - \frac 12} \frac{q^n-1}{q^{r}-1}
     < \sqrt q \frac{q^n}{q^r-1}
     \leq \frac{\theta_n}{\theta_{n-1}} \left( q-\sqrt q + \frac 3 2 \right) \frac{q^{n-1}-1}{q^r-1}.
    \end{align*}
    Hence, we may square both sides of \Cref{Eq:EmlIneq} and the inequality still holds.
    From this we obtain
    \begin{align*}
        t & \leq \frac{ q^{n-1} s}{\left( \br{q-\sqrt q + \frac 3 2} \frac{q^{n-1}-1}{q^r-1} - \frac{\theta_{n-1}}{\theta_n} s \right)^2} \\
        & \leq \frac{
         q^{n-1} \br{\sqrt q - \frac 12} \frac{q^n-1}{q^r-1}
        }{
         \br{ \br{q-\sqrt q + \frac 3 2} \frac{q^{n-1}-1}{q^r-1} - \frac{q^n-1}{q^{n+1}-1} \br{\sqrt q - \frac 12} \frac{q^n-1}{q^r-1} }^2
        } \\
        & = \frac{
         q^{n-1} \br{\sqrt q - \frac 12} \br{q^n-1} \br{q^r-1}
        }{ \br{
         \br{q-\sqrt q + \frac 3 2} \br{q^{n-1}-1} - \frac{q^n-1}{q^{n+1}-1} \br{\sqrt q - \frac 12} \br{q^n-1}
        }^2 } \\
        & < \frac{\sqrt q q^{2n+r-1}}{ \br{ \br{q- 2 \sqrt q} q^{n-1}}^2 }
        = \frac{\sqrt q}{\left( \sqrt q - 2 \right)^2} q^r.
        \qedhere
    \end{align*}
\end{proof}

\begin{lem}
    \label{Lem:ForbiddenSpectrumSmall}
    It is not possible that
    \[
        \delta \theta_{n-r} < s < \left( \sqrt q - \frac 1 2 \right) \frac{q^n-1}{q^{r}-1}.
    \]
\end{lem}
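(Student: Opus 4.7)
The plan is to assume for contradiction that the forbidden interval is non-empty, i.e.\ that $\delta \theta_{n-r} < s < \br{\sqrt q - \frac 12} \frac{q^n-1}{q^r-1}$, and then apply the two previous lemmas to obtain contradictory bounds on the number $t$ of rich hyperplanes. From $s > \delta \theta_{n-r}$, \Cref{Lem:LowerBoundOnT} yields $t \geq \theta_r$. From $s \leq \br{\sqrt q - \frac 12} \frac{q^n-1}{q^r-1}$, \Cref{Lem:UpperBoundOnT} yields $t < \frac{\sqrt q}{\br{\sqrt q - 2}^2} q^r$. Thus the entire proof reduces to verifying that these two estimates cannot simultaneously hold when $q \geq 16$.

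The key numerical fact to check is that $\br{\sqrt q - 2}^2 \geq \sqrt q$ for $q \geq 16$. Writing $u \coloneqq \sqrt q$, this becomes $u^2 - 5u + 4 \geq 0$, i.e.\ $(u-1)(u-4) \geq 0$, which holds precisely when $u \geq 4$ (equivalently, $q \geq 16$). Consequently $\frac{\sqrt q}{\br{\sqrt q - 2}^2} \leq 1$, so the upper bound becomes $t < q^r$. Since $\theta_r = q^r + q^{r-1} + \dots + 1 > q^r$, we get $t < q^r < \theta_r$, contradicting $t \geq \theta_r$.

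There is essentially no obstacle here: all of the substantive work is already contained in \Cref{Lem:LowerBoundOnT} and \Cref{Lem:UpperBoundOnT}. The present lemma is simply the observation that the lower bound $\theta_r$ of order $q^r$ and the upper bound of order $\frac{1}{(\sqrt q - 2)^2}\sqrt q \cdot q^r$ become incompatible as soon as $q \geq 16$, which is exactly the hypothesis of \Cref{thm:WeightSpectrumSubspaces}. Combined with \Cref{Lem:ForbiddenSpectrumBulk}, this completes the proof of the theorem, since the only remaining values of $s$ not already excluded (those with $s \leq \delta \theta_{n-r}$ or $s > \br{q - \sqrt q + \frac 32} \frac{q^n-1}{q^r-1}$) correspond exactly to the two alternatives in the theorem's conclusion.
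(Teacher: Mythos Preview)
Your proof is correct and follows essentially the same approach as the paper: assume the forbidden interval contains $s$, combine the lower bound $t \geq \theta_r$ from \Cref{Lem:LowerBoundOnT} with the upper bound $t < \frac{\sqrt q}{(\sqrt q - 2)^2} q^r$ from \Cref{Lem:UpperBoundOnT}, and observe that for $q \geq 16$ these are incompatible since $(\sqrt q - 2)^2 \geq \sqrt q$. Your explicit factorisation $(u-1)(u-4) \geq 0$ makes the numerical check slightly more transparent than the paper's version, but the argument is the same.
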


\begin{proof}
    Suppose the contrary.
    By \Cref{Lem:LowerBoundOnT} and \Cref{Lem:UpperBoundOnT},
    \[
        q^{r} < \theta_{r} \leq t < \frac{\sqrt q}{\left( \sqrt q - 2 \right)^2}  q^r.
    \]
    In particular, this implies that
    \[
        \sqrt q > \br{ \sqrt q - 2 }^2
    \]
    which yields a contradiction for $q \geq 16$.
\end{proof}

\Cref{Lem:ForbiddenSpectrumBulk} and \Cref{Lem:ForbiddenSpectrumSmall} together prove \Cref{thm:WeightSpectrumSubspaces}.

\begin{rmk}
 \Cref{thm:WeightSpectrumSubspaces} is a rough generalisation of a theorem from the second author's PhD thesis \autocite[Theorem \(2.2.1\)]{Denaux:PhD}, which considers the case \(r=1\).
\end{rmk}

\section{Codes of points and \texorpdfstring{\(\bm{k}\)}{k}-spaces}

This section is dedicated to proving \Cref{thm:GeneralCase} in case \(j=0\), i.e.\ the following theorem:

\begin{thm}\label{thm:OrdinaryCase}
    Suppose that \(k,\n\in\NN\), \(0<k<\n\), and let \(q\coloneqq p^h\geq32\) with \(p\) prime and \(h\in\NNnot\).
    Consider a codeword \(c\in\code{k}{\n}{q}\) with \(\wt{c}\leq\Deltta\theta_k\), where
    \[
        \Deltta\coloneqq\begin{cases}
            \frac{1}{2}\sqrt{q}-\frac{7}{2}&\text{if}\ h=2,\\
            \floor{\sqrt{q}-\frac{3}{2}}&\text{otherwise}.
        \end{cases}
    \]
    Then \(c\) is a linear combination of exactly \(\ceil{\frac{\wtInline{c}}{\theta_k}}\) \(k\)-spaces.
\end{thm}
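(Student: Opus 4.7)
The plan is to prove \Cref{thm:OrdinaryCase} by induction on $n$, with the planar case $n=2$ as the base. In the base case one has $k=1$, so the claim concerns $\code{1}{2}{q}$; a direct numerical check shows that $\Deltta(q+1)$ is dominated by the threshold appearing in \Cref{res:SzonyiWeiner}. For $h=2$ this reduces, after cancelling the common factor $p^2+1=q+1$, to the elementary inequality $-5p<1$; for $h\geq 3$ the comparison with $(\floor{\sqrt q}+1)(q+1-\floor{\sqrt q})$ is equally routine. With these checks in place, \Cref{res:SzonyiWeiner} delivers the base case.

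For the inductive step, I would take $c\in\code{k}{n}{q}$ with $\wt c\leq\Deltta\theta_k$ and set $m\coloneqq\ceil{\wt c/\theta_k}\leq\Deltta$. The aim is to exhibit $m$ distinct $k$-spaces $\kappa_1,\dots,\kappa_m$ and scalars $\alpha_i\in\FF p\setminus\set 0$ with $c=\sum_i\alpha_i\kappa_i^{(0)}$. The argument splits naturally into a spectrum phase and an extraction phase.

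In the spectrum phase, the goal is to establish that every line of $\pg n q$ meets $\supp c$ in either at most $m$ or at least $q-m+2$ points. Given $R\notin\suppp 0 c$ (which exists because $\wt c<\theta_n$) and a hyperplane $\Pi\not\ni R$, \Cref{res:Projection} places $\proj R\Pi(c)$ in $\code{k}{n-1}{q}$ with weight at most $\wt c$. The outer induction hypothesis decomposes this projection as a combination of at most $m$ $k$-spaces of $\Pi$, and \Cref{prp:Intuition}(2) translates the decomposition into intersection sizes: every subspace of $\Pi$ meets $\supp{\proj R\Pi(c)}$ in either at most $m$ or at least $q-m+2$ points. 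Varying $(R,\Pi)$ over enough non-support pairs and using the relationship between the intersection sizes in $\Pi$ and in $\pg n q$, one lifts this dichotomy from $\Pi$ back to every line of $\pg n q$. With the line dichotomy secured, \Cref{thm:WeightSpectrumSubspaces} (with $\delta=m\leq\sqrt q-1$, applied iteratively for $r=1,2,\dots,n-1$) amplifies it to all subspaces; the alternative of $\size{\supp c}$ being very large is ruled out by the hypothesis $\wt c\leq\Deltta\theta_k$.

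In the extraction phase, call a $k$-space $\kappa$ \emph{rich} if $\size{\kappa\cap\supp c}\geq q-m+2$. The line spectrum, combined with a Bose--Burton style argument via \Cref{Res:BlockingSet}, shows that the rich $k$-spaces number at most $m$; and the weight bound forces every point of $\supp c$ to lie on at least one rich $k$-space, since otherwise a line through such a stray point would violate the spectrum. For each rich $\kappa_i$, the value of $c$ at points of $\kappa_i\setminus\bigcup_{j\neq i}\kappa_j$ is constant, say $\alpha_i\in\FF p\setminus\set 0$, and the residual $c-\sum_i\alpha_i\kappa_i^{(0)}$ has support disjoint from $\bigcup_i\kappa_i$. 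A final application of \Cref{thm:WeightSpectrumSubspaces} to this residual codeword forces it to vanish, yielding the decomposition, and \Cref{prp:Intuition}(1) identifies the number of summands as $\ceil{\wt c/\theta_k}$.

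The main obstacle I anticipate is the spectrum phase: passing from the line-wise information inside one hyperplane $\Pi$ to a uniform statement about all lines of $\pg n q$. The projection $\proj R\Pi$ loses information about lines through $R$, so one has to vary $R$ over a large enough set of non-support points while simultaneously keeping $\wt{\proj R\Pi(c)}$ within the inductive regime. Once the uniform line spectrum is in place, \Cref{thm:WeightSpectrumSubspaces} provides the amplification essentially as a black box, and the coefficient extraction proceeds along standard lines.
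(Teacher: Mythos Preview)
Your plan has a structural gap at $k=n-1$. The projection map $\proj R\Pi$ sends $\code k n q$ to $\code k {n-1} q$ only when $k\leq n-2$ (this is the standing hypothesis in \Cref{res:Projection}); when $k=n-1$ there is simply no code $\code{n-1}{n-1}q$ to project into, and your inductive step as written does not apply. The paper handles this by splitting into two cases. For $k=n-1$ it uses \emph{restriction} to hyperplanes (\Cref{res:Restricted}), reducing to $\code{n-2}{n-1}q$ and invoking the induction hypothesis there, with the line spectrum obtained directly from \Cref{res:SzonyiWeiner} via a planar bound (\Cref{prp:PlaneBound}). Only for $k\leq n-2$ does it use projection, and there the choice of $R$ in the lifting step you flag as ``the main obstacle'' is genuinely delicate: one first finds a plane through the offending line with few extra support points, then picks $R$ in that plane off specified connecting lines so that the projected line demonstrably retains a forbidden intersection size --- this is the content of \Cref{lem:ThinOrThickLines}, and it is not a step one can wave through.

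Your extraction phase also has a real gap. The claim that every point of $\supp c$ lies on a rich $k$-space does not follow from the line spectrum: a point $P\in\supp c$ can lie only on thin lines without violating anything, and when $k<n-1$ the resulting bound $\wt c\leq 1+(m-1)\theta_{n-1}$ is far weaker than the hypothesis. Likewise, bounding the number of rich $k$-spaces by $m$ via Bose--Burton is not explained, and applying \Cref{thm:WeightSpectrumSubspaces} to the residual would require first re-establishing the spectrum hypothesis for it. The paper sidesteps extraction entirely: it works with a minimal counterexample (minimal $n$, then $k$, then $\wt c$) and argues by contradiction. The key device is \Cref{lem:HalfPointTrick}: in a minimal counterexample no $k$-space can be thick, since subtracting a dominant scalar multiple would give a smaller-weight codeword that is already a good combination by minimality. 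The remainder of each case then derives a numerical contradiction from the absence of thick $k$-spaces (via counting in the $k=n-1$ case, and via a two-projection-point argument for $k\leq n-2$), rather than constructing the decomposition directly.
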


Mimicking \Cref{Sec:Spectrum}, the proof will be given throughout this section.
In essence, we use induction on both \(n\) and \(k\), starting from the plane case described in \Cref{res:SzonyiWeiner}.

As \Cref{thm:OrdinaryCase} suggests, we will often make use of the following integer value:
\[
    \Deltta\coloneqq\begin{cases}
        \frac{1}{2}\sqrt{q}-\frac{7}{2}&\text{if}\ q=p^2,\\
        \floor{\sqrt{q}-\frac{3}{2}}&\text{if}\ q=p^h, h\geq3.
    \end{cases}
\]
One can check that
\begin{equation}\label{eq:SzonyiWeiner}
    \br{\Deltta+1}\br{q+1}<\begin{cases}
        \frac{\br{\sqrt{q}-1}\br{\sqrt{q}-4}\br{q+1}}{2\sqrt{q}-1}&\text{if}\ q=p^2,\\
        \br{\floor{\sqrt{q}}+1}\br{q+1-\floor{\sqrt{q}}}&\text{if}\ q=p^h, h\geq3.
    \end{cases}
\end{equation}

Moreover, throughout this section, we
\begin{itemize}
    \item assume that \(q\geq32\) is not prime, and
    \item suppose, to the contrary, that there exists some codeword \(c\in\code{k}{\n}{q}\), \(\wt{c}\leq\Deltta\theta_k\), for which \Cref{thm:OrdinaryCase} is not true.
    We then choose \(\n\), \(k\) and \(\wt{c}\) --- in that order --- to be minimal with respect to this property.
\end{itemize}

\begin{dfn}
    An \(i\)-space \(\iota\) is called \emph{thick} (with respect to \(c\)) in case
    \[
        \wt{\restr{c}{\iota}}\geq\begin{cases}q-\sqrt{q}+3&\text{if}\ i=1,\\\br{q-\sqrt{q}+1}\theta_{i-1}&\text{if}\ i\geq2.\end{cases}
    \]
\end{dfn}

\begin{lem}
 \label{lem:HalfPointTrick}
 There are no thick $k$-spaces.
\end{lem}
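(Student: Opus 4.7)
The plan is to suppose, towards a contradiction, that some $k$-space $\kappa$ is thick, and then to build from $c$ a codeword in $\code{k}{\n}{q}$ of strictly smaller weight. Applying the minimality of $\wt{c}$ to that new codeword will give a short representation of $c$ as a linear combination of $k$-spaces, contradicting the assumption that $c$ is a counterexample. The natural candidate is $c' \coloneqq c - \alpha^*\kappa^{(0)}$, where $\alpha^* \in \FF{p}\setminus\set{0}$ is the value taken by $c$ most often on $\kappa$.

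First, I would introduce the counts $n_\alpha \coloneqq \size{\sett{P \in \kappa}{c(P)=\alpha}}$ for each $\alpha \in \FF{p}$. Thickness translates directly into an upper bound on the number of holes in $\kappa$, namely $n_0 \leq (\sqrt{q}-1)\theta_{k-1}+1$ if $k\geq 2$ and $n_0 \leq \sqrt{q}-2$ if $k=1$. Since $h\geq 2$ forces $p\leq\sqrt{q}$, a pigeonhole argument over the $p-1$ nonzero values of $\FF{p}$ yields some $\alpha^* \in \FF{p}\setminus\set{0}$ with
\[
 n_{\alpha^*} \;\geq\; \frac{\wt{\restr{c}{\kappa}}}{p-1} \;\geq\; \frac{\wt{\restr{c}{\kappa}}}{\sqrt{q}-1}.
\]
A short arithmetic check, combining this with the thickness lower bound on $\wt{\restr{c}{\kappa}}$ and the above upper bound on $n_0$, then confirms that $n_{\alpha^*} > n_0$ in both cases.

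Next, a direct support count gives
\[
 \wt{c'} \;=\; \wt{c} + n_0 - n_{\alpha^*} \;<\; \wt{c}.
\]
Since $c' \in \code{k}{\n}{q}$ and $\wt{c'} < \wt{c} \leq \Deltta\theta_k$, the minimality of $\wt{c}$ forces $c'$ to satisfy \Cref{thm:OrdinaryCase}: it is a linear combination of exactly $m \coloneqq \ceil{\wt{c'}/\theta_k} \leq \Deltta$ $k$-spaces. Consequently $c = c' + \alpha^*\kappa^{(0)}$ is expressible as a linear combination of at most $m+1 \leq \Deltta+1 < \sqrt{q}$ $k$-spaces, and \Cref{prp:Intuition}(1), applied with $j=0$, upgrades this count to exactly $\ceil{\wt{c}/\theta_k}$, contradicting the choice of $c$ as a counterexample.

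The main obstacle is the verification that $n_{\alpha^*} > n_0$: both quantities are of order roughly $\sqrt{q}\,\theta_{k-1}$, so the inequality is not purely asymptotic. It does work out comfortably because the thickness gap $q-\sqrt{q}+1$ is essentially $q$, while the loss from pigeonhole is only a factor $\sqrt{q}-1$; the same tight bound $p-1 \leq \sqrt{q}-1$ appears on both sides and the remaining terms have the right sign. A secondary routine point is confirming $\Deltta+1 < \sqrt{q}$, so that \Cref{prp:Intuition}(1) applies; this follows immediately from the explicit definition of $\Deltta$ in both cases ($h=2$ and $h\geq 3$).
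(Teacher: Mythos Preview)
Your proposal is correct and follows essentially the same argument as the paper: pick the most frequent nonzero value $\alpha^*$ on $\kappa$ via pigeonhole over $p-1\leq\sqrt{q}-1$ scalars, show it outnumbers the holes, subtract $\alpha^*\kappa^{(0)}$ to drop the weight, and invoke minimality together with \Cref{prp:Intuition}(1). The paper handles both $k=1$ and $k\geq 2$ in one stroke by using the slightly weaker bound $\wt{\restr{c}{\kappa}}\geq(q-\sqrt{q}+1)\theta_{k-1}$ throughout, but your case split is equally valid.
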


\begin{proof}
 Suppose, to the contrary, that $\kappa$ is a thick $k$-space.
 Then $\kappa$ has at most
 \[
  \theta_k - \br{q-\sqrt q + 1} \theta_{k-1}
  = \br{\sqrt q - 1} \theta_{k-1} + 1
 \]
 holes.
 Since there are $p-1 \leq \sqrt q - 1$ scalars in $\FF p^*$, one of these occurs at least
 \[
  \frac{q-\sqrt q + 1}{\sqrt q - 1} \theta_{k-1}
  > \br{\sqrt q - 1} \theta_{k-1} + 1
 \]
 times as the coefficient of a point of $\kappa$ with respect to $c$.
 Call this scalar $\alpha$.
 Define $c' \coloneqq c-\alpha \kappa^{(0)}$.
 Then $\wt{c'} < \wt c$.
 Therefore, due to the minimal weight of \(c\), the codeword \(c'\) must be equal to a linear combination of at most \(\Deltta\) \(k\)-spaces, which means that \(c=c'+\alpha\kappa^{(0)}\) has to be a linear combination of at most \(\Deltta+1\) \(k\)-spaces.
 But then, by \Cref{prp:Intuition}(1), \(c\) is a codeword for which \Cref{thm:OrdinaryCase} is true, a contradiction.
\end{proof}

As \(\wt{c}\leq\Deltta\theta_k<\theta_{k+1}\), there must exist an \(\br{\n-k-1}\)-space \(\rho\) that avoids \(\supp{c}\), cf.\ \Cref{Res:BlockingSet}.
If all \(\br{\n-k}\)-spaces would contain at least \(\Deltta+1\) points of \(\supp{c}\), then so would all \(\br{\n-k}\)-spaces through \(\rho\), implying that \(\wt{c}\geq\br{\Deltta+1}\theta_k\), a contradiction.
Therefore, the following is well-defined.
\begin{dfn}\label{dfn:deltta}
    Define \(\deltta\) to be the largest number in \(\set{0,1,\dots,\Deltta}\) for which there exists an \(\br{\n-k}\)-space containing precisely \(\deltta\) points of \(\supp{c}\).
\end{dfn}

\subsection{The case \texorpdfstring{\(\bm{k=\n-1}\)}{k = n-1}}
 \label{Sec:PtHyp}

Throughout this subsection, we assume that \(k=\n-1\).

\begin{dfn}[\(k=\n-1\)]
    An \(i\)-space \(\iota\) is called \emph{thin} (with respect to \(c\)) in case
    \[
        \wt{\restr{c}{\iota}}\leq\deltta\theta_{i-1}.
    \]
\end{dfn}

\begin{prp}\label{prp:PlaneBound}
    Let \(\pi\) be a plane that contains an \(m\)-secant \(\ell\) to \(\supp{c}\).
    Then
    \[
        \wt{\restr{c}{\pi}}\geq\begin{cases}
            \br{\Deltta+1}\br{q+1}+1&\text{if}\ \Deltta+2\leq m\leq q-\Deltta,\\
            m\br{q-m+2}&\text{otherwise}.
            \end{cases}
    \]
\end{prp}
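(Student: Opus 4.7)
The plan is to combine Result~\ref{res:SzonyiWeiner} (Sz\H{o}nyi--Weiner) with Proposition~\ref{prp:Intuition} applied to $\restr{c}{\pi} \in \code{1}{2}{q}$ (available by Result~\ref{res:Restricted}). In each case I argue by contradiction: assume the claimed lower bound on $\wt{\restr{c}{\pi}}$ fails; the assumed upper bound then sits below the Sz\H{o}nyi--Weiner threshold by inequality~\eqref{eq:SzonyiWeiner}, so $\restr{c}{\pi}$ is a linear combination of exactly $m' = \lceil \wt{\restr{c}{\pi}}/(q+1) \rceil$ distinct lines, with $m' \leq \Deltta + 1 \leq \sqrt q$, so that Proposition~\ref{prp:Intuition}(2) applies.

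For $\Deltta + 2 \leq m \leq q - \Deltta$, the dichotomy from Proposition~\ref{prp:Intuition}(2) forces every line of $\pi$ to meet $\supp{\restr{c}{\pi}}$ in at most $m'$ or at least $q - m' + 2$ points; since $m > m' \leq \Deltta + 1$ and $m \leq q - \Deltta < q - m' + 2$, the $m$-secant $\ell$ violates both, an immediate contradiction.

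For the small regime $m \leq \Deltta + 1$, write $\restr{c}{\pi} = \sum_{i=1}^{m'} \beta_i \ell_i^{(0)}$. I first rule out $\ell \in \{\ell_1, \ldots, \ell_{m'}\}$: covering the $q + 1 - m$ holes of $\restr{c}{\pi}$ on $\ell$ would force $m' \geq q + 2 - m$, incompatible with $m' \leq \Deltta + 1$ once $q \geq 32$. Each $\ell_i$ thus meets $\ell$ in a single point, so $m \leq m'$. A double count on the multiplicity classes $\mathcal P_r \coloneqq \{P \in \pi \setminus \ell : |\{i : P \in \ell_i\}| = r\}$, using $\sum_r r|\mathcal P_r| = m' q$ and $\sum_{r \geq 2} \binom{r}{2} |\mathcal P_r| \leq \binom{m'}{2}$, yields $|\mathcal P_1| \geq m'(q - m' + 1)$. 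Together with $m \leq m' \leq \sqrt q$ and the monotonicity of $x \mapsto x(q - x + 1)$ on $[0, (q+1)/2]$, this gives $\wt{\restr{c}{\pi}} \geq m + m'(q - m' + 1) \geq m(q - m + 2)$, contradicting the assumption.

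The hard part is the symmetric regime $m \geq q - \Deltta + 1$. Setting $s \coloneqq q + 1 - m \leq \Deltta$, the assumption forces $m' \leq s + 1$, while Proposition~\ref{prp:Intuition}(2) applied to $\ell$ forces $m' \geq s + 1$, pinning $m' = s + 1$. Now $\ell \notin \{\ell_i\}$ would imply $m \leq m' = s + 1$, contradicting $m \geq q - \Deltta + 1$; so $\ell = \ell_1$. The delicate structural step is to show that the remaining $s$ lines $\ell_2, \dots, \ell_{s+1}$ form a perfect matching with the $s$ holes of $\restr{c}{\pi}$ on $\ell$, each with coefficient $-\beta_1$: each hole needs at least one $\ell_i$ through it in order for $\restr{c}{\pi}$ to vanish there, there are exactly $s$ available incidences, and once the matching is forced the equation $\beta_1 + \beta_i = 0$ at each hole fixes the coefficients. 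This rigidity yields $\restr{c}{\pi} = \beta_1 \left( \ell^{(0)} - \sum_{i=2}^{s+1} \ell_i^{(0)} \right)$, and the same $\mathcal P_r$ count applied to $\ell_2, \ldots, \ell_{s+1}$ gives $\wt{\restr{c}{\pi}} \geq m + s(q - s + 1) = (q + 1 - s)(s + 1) = m(q - m + 2)$, closing the contradiction.
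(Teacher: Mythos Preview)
Your proof is correct, and the middle regime (\(\Deltta+2 \leq m \leq q-\Deltta\)) is handled essentially as in the paper. For the remaining cases, however, the paper takes a much shorter and uniform route. Having expressed \(\restr c \pi\) as a linear combination of a set \(\mL\) of at most \(\Deltta+1\) lines, one simply observes that each line of \(\mL\) has at least \(q-|\mL|+2\) points not lying on any other line of \(\mL\); since all coefficients are non-zero, this yields \(\wt{\restr c \pi} \geq |\mL|(q-|\mL|+2)\) directly. From the case analysis of the first part one already knows that \(m \leq |\mL|\) when \(m \leq \Deltta+1\) and \(|\mL| \geq q+2-m\) when \(m \geq q-\Deltta+1\); using monotonicity and the symmetry \(x(q-x+2) = (q+2-x)x\) on \([0,(q+2)/2]\), both cases collapse to \(|\mL|(q-|\mL|+2) \geq m(q-m+2)\) in a single line. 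Your approach via the \(\mathcal P_r\)-count reaches the same conclusion but with more bookkeeping, and the ``delicate structural step'' you perform in the large regime (the perfect matching of holes with \(\ell_2,\dots,\ell_{s+1}\) and the determination \(\beta_i=-\beta_1\)) is in fact never used afterwards: once \(\ell=\ell_1\), the \(\mathcal P_r\)-count applied to \(\ell_2,\dots,\ell_{s+1}\) already produces points of value \(\beta_i\neq 0\) off \(\ell\), regardless of what those \(\beta_i\) are.
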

\begin{proof}
    Suppose that \(\wt{\restr{c}{\pi}}\leq\br{\Deltta+1}\br{q+1}\).
    Then by \Cref{eq:SzonyiWeiner} and Results \ref{res:Restricted} and \ref{res:SzonyiWeiner}, there exists a set \(\mL\) of at most \(\Deltta+1\) lines covering the points of \(\supp{\restr{c}{\pi}}\), each such a line containing at least \(q-\size{\mL}+2\geq q-\Deltta+1\) unique points of \(\supp{\restr{c}{\pi}}\).
    If \(\ell\in\mL\), then \(m\geq q-\Deltta+1\).
    If \(\ell\notin\mL\), then it intersects each line of \(\mL\) in exactly one point, implying that \(m\leq\size{\mL}\leq\Deltta+1\).

    In conclusion, if \(\wt{\restr{c}{\pi}}\leq\br{\Deltta+1}\br{q+1}\), then either \(m\leq\Deltta+1\) or \(m\geq q-\Deltta+1\).
    Moreover, by the above observation, we know that
    \[
        \wt{\restr c \pi} \geq |\mL| \left( q - |\mL| + 2 \right)
     \geq m (q-m+2).\qedhere
    \]
\end{proof}

\begin{lem}\label{lem:ThinOrThickLinesHYP}
    Every line is either thin or thick.
\end{lem}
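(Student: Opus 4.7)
The plan is to argue by contradiction. Suppose that some line $\ell$ has weight $m$ in the forbidden open interval $\br{\delta,q-\sqrt q+3}$. By the maximality in \Cref{dfn:deltta} no line has weight in $\{\delta+1,\dots,\Delta\}$, so I may assume $m\geq\Delta+1$. The backbone of the proof is a double count of $(P,\pi)$-incidences with $P\in\supp c$ and $\pi$ a plane through $\ell$, combined with the lower bound from \Cref{prp:PlaneBound}.

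\textbf{Double count.} Every plane $\pi\supset\ell$ contains $\ell$ as an $m$-secant, so \Cref{prp:PlaneBound} yields
\[ \wt{\restr c\pi}\geq W\coloneqq\begin{cases}(\Delta+1)(q-\Delta+1)&\text{if }m=\Delta+1,\\ (\Delta+1)(q+1)+1&\text{if }\Delta+2\leq m\leq q-\Delta.\end{cases} \]
Each point of $\supp c\cap\ell$ lies in all $\theta_{n-2}$ planes through $\ell$, and each point of $\supp c\setminus\ell$ lies in the unique plane $\vspan{\ell,P}$; hence
\[ m\,\theta_{n-2}+(\wt c-m)=\sum_{\pi\supset\ell}\wt{\restr c\pi}\geq\theta_{n-2}\,W. \]
Combined with $\wt c\leq\Delta\theta_{n-1}=\Delta(q\theta_{n-2}+1)$, this yields $\theta_{n-2}(W-m-\Delta q)\leq\Delta-m$. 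For $m=\Delta+1$ the inequality simplifies to $\theta_{n-2}(q-\Delta^2-\Delta)\leq-1$, which fails since $\Delta^2+\Delta<q$ for every admissible $q\geq32$ (a short check using $\Delta\leq\sqrt q-\tfrac32$ for $h\geq3$ and $\Delta=(\sqrt q-7)/2$ for $h=2$). For $\Delta+2\leq m\leq q-\Delta$ it simplifies to $\theta_{n-2}(q+\Delta+2-m)\leq\Delta-m$, whose left-hand side is $\geq 2\theta_{n-2}>0$ and whose right-hand side is $\leq-2$, again a contradiction.

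\textbf{Boundary and main obstacle.} The above treats the bulk of the forbidden range, namely $\Delta+1\leq m\leq q-\Delta$. A narrow boundary window $m\in\{q-\Delta+1,\dots,q+2-\lfloor\sqrt q\rfloor\}$ --- nonempty only when $\sqrt q$ is irrational with fractional part at least $\tfrac12$ --- lies outside the reach of the plane count alone. To close it I would invoke the induction hypothesis on $n$: by \Cref{res:Restricted}, for any hyperplane $\Pi\supset\ell$, $\restr c\Pi\in\code{n-2}{n-1}{q}$; if $\wt{\restr c\Pi}\leq\Delta\theta_{n-2}$ then the theorem for $\pg{n-1}{q}$ combined with \Cref{prp:Intuition}(2) would force the line $\ell\subset\Pi$ to have weight $\leq\Delta$ or $\geq q-\Delta+2$, neither of which accommodates such $m$. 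Hence every $\Pi\supset\ell$ must be ``heavy'', and pairing this with the non-thickness from \Cref{lem:HalfPointTrick} should squeeze $\wt c$ into an impossible interval. Making this final squeeze quantitatively sufficient is the step I expect to require the most care; the conceptual bulk of the lemma is the clean plane-counting inequality above, and the precise calibration of $\Delta$ is there exactly so that the remaining boundary sliver is as narrow as possible.
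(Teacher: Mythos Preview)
Your treatment of the range $\Delta+1\leq m\leq q-\Delta$ is correct and is essentially the paper's argument, recast as a global double count rather than the paper's existence phrasing (``not every plane through $\ell$ can contain at least $q\Delta$ points of $\supp c\setminus\ell$'').

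The genuine gap is the boundary value $m=q-\lfloor\sqrt q\rfloor+2$ with $\Delta=\lfloor\sqrt q\rfloor-1$ (which does occur, e.g.\ at $q=32$). Your sketched approach---showing every hyperplane $\Pi\supset\ell$ must have $\wt{\restr c\Pi}>\Delta\theta_{n-2}$, then squeezing against the non-thickness bound of \Cref{lem:HalfPointTrick}---does not close. The double count over hyperplanes through $\ell$ gives
\[
 m\,\theta_{n-2}+(\wt c-m)\,\theta_{n-3}\geq\theta_{n-2}\br{\Delta\theta_{n-2}+1},
\]
which for $n=3$ (where $\theta_{n-3}=1$) rearranges to $2\Delta q+1>q^2$, and this fails by roughly $q^2-2q^{3/2}$; at $q=32$, $\Delta=4$, one is asking for $257>1024$. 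The ``not thick'' upper bound on $\wt{\restr c\Pi}$ only yields an upper bound on $\wt c$ far above $\Delta\theta_{n-1}$, so there is no squeeze. The induction-on-$n$ idea gives you nothing beyond the already-known $\wt c\leq\Delta\theta_{n-1}$.

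The paper closes this single value of $m$ by a different, \emph{bootstrapping} idea confined to one plane. It first shows that not every plane $\pi\supset\ell$ can satisfy $\wt{\restr c\pi}\geq(\lfloor\sqrt q\rfloor+1)(q-\lfloor\sqrt q\rfloor+1)$; picking one below this threshold, \Cref{res:SzonyiWeiner} writes $\restr c\pi$ as a combination of either $\lfloor\sqrt q\rfloor$ or $\lfloor\sqrt q\rfloor+1$ lines (the lower bound coming from \Cref{prp:Intuition}(2) and the $m$-secant $\ell$). Since $\binom{\lfloor\sqrt q\rfloor+1}{2}<q+1$, some line of $\pi$ avoids all pairwise intersection points of these lines and therefore meets $\supp c$ in exactly $\lfloor\sqrt q\rfloor$ or $\lfloor\sqrt q\rfloor+1$ points, i.e.\ in $\Delta+1$ or $\Delta+2$ points---values already excluded in the bulk range. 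This structural step inside a single plane is the missing ingredient; a purely quantitative count over hyperplanes cannot replace it.
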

\begin{proof}
    Consider an arbitrary \(m\)-secant \(\ell\) with respect to \(\supp{c}\) and suppose, to the contrary, that \(\deltta+1\leq m<q-\sqrt{q}+3\), or, equivalently, that \(\Deltta+1\leq m\leq q-\floor{\sqrt{q}}+2\).
    Note that not every plane through $\ell$ contains at least $q \Deltta$ points of $\supp c \setminus \ell$.
    Otherwise,
    \[
     \wt c \geq \br{q \Deltta} \theta_{n-2} + m
     \geq q \Deltta \theta_{n-2} + \Deltta + 1
     = \Deltta \theta_{n-1} + 1,
    \]
    a contradiction.

    First, suppose that \( \Deltta + 2 \leq m \leq q - \Deltta \).
    By \Cref{prp:PlaneBound}, every plane through \(\ell\) has to contain at least 
    \[
     \br{\Deltta+1}\br{q+1}+1 - m \geq \Deltta (q+2) + 2 > q \Deltta
    \]
    points of \(\supp{c} \setminus \ell \).
    This leads to a contradiction.

    Similarly, if \(m = \Deltta+1 \), \Cref{prp:PlaneBound} implies that each plane through \(\ell\) contains at least
    \[
        \br{\Deltta + 1} \br{ q - \Deltta + 1 } - \br{\Deltta + 1} = q \Deltta + q - \Deltta^2 - \Deltta > q \Deltta
    \]
    points of \(\supp{c} \setminus \ell \), again yielding a contradiction.

    Since $\Deltta \leq \floor{\sqrt q - \frac 32} \leq \floor{\sqrt q} - 1$, the only remaining case to exclude is $m = q - \floor{\sqrt q} + 2$ and $\Deltta = \floor{\sqrt q} - 1$.
    In particular, this can only happen if $h \geq 3$.
    Not every plane through $\ell$ contains at least $\br{\floor{\sqrt q} + 1} \br{q-\floor{\sqrt q} + 1}$ points of $\supp{c}$, since
    \begin{align*}
     \br{\floor{\sqrt q} + 1} \br{q-\floor{\sqrt q} + 1} - \br{q - \floor{\sqrt q} + 2}
     &= \floor{\sqrt q} q - \floor{\sqrt q}^2 + \floor{\sqrt q} - 1 \\
     \geq \br{\floor {\sqrt q} - 1} q + \floor{\sqrt q} - 1
     &= q\Deltta + \floor{\sqrt q} - 1
     > q\Deltta.
    \end{align*}
    Thus there exists a plane $\pi$ through $\ell$ with
    \(
     \wt{\restr c \pi} < \br{\floor{\sqrt q} + 1} \br{q-\floor{\sqrt q} + 1}
    \).
    By \Cref{res:SzonyiWeiner}, $\restr c \pi$ is a linear combination of
    \[
     \ceil{\frac{\wt {\restr c \pi}}{q+1}}
     \leq \ceil{ \frac{\br{\floor{\sqrt q} + 1} \br{q-\floor{\sqrt q} + 1}}{q+1} } \leq \floor{\sqrt q} + 1
    \]
    lines.
    Moreover, since $\restr c \pi$ has a $(q-\floor{\sqrt q} + 2)$-secant, it cannot be a linear combination of fewer than $\floor{\sqrt q}$ lines by \Cref{prp:Intuition}(2).
    Since $\binom{\floor{\sqrt q} + 1}2 < q+1$, the intersection points of these lines do not form a blocking set in $\pi$ (see \Cref{Res:BlockingSet}).
    In particular, some line of $\pi$ doesn't contain any of these intersection points and hence contains either $\floor{\sqrt q}$ or $\floor{\sqrt q} + 1$ points of $\supp c$.
    But we have already excluded the existence of such a line since $\Deltta + 1 = \floor{\sqrt q} < \floor{\sqrt q} + 1 < q - \Deltta$.
    This yields a contradiction yet again, concluding the proof.
\end{proof}

\begin{crl}\label{crl:ThinOrThickSubspacesHYP}
    Every subspace is either thin or thick.
    In particular, every hyperplane is thin.
\end{crl}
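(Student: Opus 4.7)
The plan is to derive this corollary as an almost immediate consequence of Lemma \ref{lem:ThinOrThickLinesHYP} combined with the main intersection-size amplification result, \Cref{thm:WeightSpectrumSubspaces}, and then to rule out thick hyperplanes using Lemma \ref{lem:HalfPointTrick}.

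First I would handle arbitrary $i$-spaces $\iota$ with $i \geq 2$. The key observation is that, by Lemma \ref{lem:ThinOrThickLinesHYP}, every line of $\pg nq$ (in particular, every line contained in $\iota$) meets $\supp c$ in either at most $\deltta$ points or at least $q-\sqrt q + 3$ points. Regarding $\iota$ as a copy of $\pg i q$ and the point set $\supp{\restr c \iota} = \supp c \cap \iota$ as a point set inside it, we may therefore apply \Cref{thm:WeightSpectrumSubspaces} with parameters $r = 1$, ambient dimension $i$, and threshold $\deltta \leq \sqrt q - 1$. The conclusion is that either
\[
    \wt{\restr c \iota} = \size{\supp c \cap \iota} \leq \deltta \theta_{i-1},
\]
in which case $\iota$ is thin by definition, or
\[
    \wt{\restr c \iota} > \br{q - \sqrt q + \tfrac 32} \frac{q^i-1}{q-1} = \br{q - \sqrt q + \tfrac 32}\theta_{i-1} > \br{q-\sqrt q + 1}\theta_{i-1},
\]
in which case $\iota$ is thick. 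The case $i=1$ is precisely Lemma \ref{lem:ThinOrThickLinesHYP} itself, so the first assertion of the corollary follows.

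For the second assertion, a hyperplane is an $(n-1)$-space, i.e.\ a $k$-space in the setting of this subsection. By Lemma \ref{lem:HalfPointTrick}, no $k$-space is thick. Combining this with the dichotomy just established forces every hyperplane to be thin.

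I do not anticipate any real obstacle here: the whole content has been packaged into \Cref{thm:WeightSpectrumSubspaces} and Lemmas \ref{lem:HalfPointTrick} and \ref{lem:ThinOrThickLinesHYP}. The only minor point to check is the numerical comparison $q - \sqrt q + \tfrac 32 > q - \sqrt q + 1$ which makes the output of \Cref{thm:WeightSpectrumSubspaces} strong enough to qualify as thick.
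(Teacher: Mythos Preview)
Your proposal is correct and is essentially the paper's own proof, just with the details of the application of \Cref{thm:WeightSpectrumSubspaces} spelled out explicitly. The paper's proof is a two-line invocation of \Cref{lem:ThinOrThickLinesHYP}, \Cref{thm:WeightSpectrumSubspaces} with $r=1$, and \Cref{lem:HalfPointTrick}, exactly as you describe.
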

\begin{proof}
    By \Cref{lem:ThinOrThickLinesHYP} and \Cref{thm:WeightSpectrumSubspaces} (\(r = 1\)), every subspace is either thick or thin.
    Moreover, by \Cref{lem:HalfPointTrick}, there are no thick hyperplanes.
\end{proof}

\begin{proof}[Proof of \Cref{thm:OrdinaryCase} in case $k = n-1$]
    Because we chose \(\n\) to be minimal such that \(c\) is a supposed counterexample to \Cref{thm:OrdinaryCase} (\(k=\n-1\)), we know that \(\n\geq3\) due to \Cref{eq:SzonyiWeiner} and \Cref{res:SzonyiWeiner}.
    This choice of minimality also implies --- due to \Cref{crl:ThinOrThickSubspacesHYP} and \Cref{res:Restricted} --- that the codeword \(\restr{c}{\Pi}\) is a linear combination of exactly \(\ceil{\frac{\wtInline{\restr c \Pi}}{\theta_{\n-2}}} \leq \delta\) \(\br{\n-2}\)-subspaces of \(\Pi\), for every hyperplane \(\Pi\).
    
    We know that \(\wt{c}\leq\deltta\theta_{n-1}\), otherwise $\wt c \geq \br{q-\sqrt q + 1} \theta_{n-1} > \Deltta \theta_{n-1}$ by \Cref{crl:ThinOrThickSubspacesHYP}, a contradiction.
    Hence, we may assume that \(\deltta\geq1\).
    Consider a \(\deltta\)-secant \(\ell\) to \(\supp{c}\).
    By \Cref{prp:PlaneBound}, all planes through \(\ell\) contain at least \(\deltta\br{q-\deltta+2}\) points of \(\supp{c}\), which implies that
    \begin{align}\label{eq:LowerBoundWeight}
        \wt{c}&\geq\br{\deltta\br{q-\deltta+2}-\deltta}\theta_{\n-2}+\deltta\nonumber\\
        &=\deltta q^{\n-1}-\br{\deltta^2-2\deltta}\theta_{\n-2}.
    \end{align}
    
    Let \(\Pi\) be a hyperplane containing a point of $\supp c$.
    Then $\restr c \Pi$ is a linear combination of at most $\delta$ $(n-2)$-spaces.
    Let \(\Sigma\) be one of these \(\br{\n-2}\)-spaces.
    Then $\wt{\restr c \Sigma} \geq \theta_{n-2} - (\delta-1) \theta_{n-3} > \delta \theta_{n-3}$.
    
    Now take any hyperplane $\Pi'$ through $\Sigma$.
    Then $\restr c {\Pi'}$ is a linear combination of at most $\delta$ $(n-2)$-spaces.
    Every $(n-2)$-space of $\Pi'$ not occurring in the linear combination contains at most $\delta \theta_{n-3}$ points of $\supp c$.
    Therefore, $\Sigma$ is one of the $(n-2)$-spaces occurring in the linear combination, and the points of $\supp{ \restr c {\Pi'}} \setminus \Sigma$ are contained in at most $\delta-1$ other $(n-2)$-spaces.
    
    As this holds for every hyperplane $\Pi'$ through $\Sigma$,
    \[
        \wt{c}\leq \br{q+1}\br{\deltta-1}q^{\n-2}+\theta_{\n-2}=\br{\deltta-1}q^{\n-1}+\deltta q^{\n-2}+\theta_{\n-3}.
    \]
    Combining this with \Cref{eq:LowerBoundWeight}, we obtain
    \begin{align*}
        \br{\deltta-1}q^{\n-1}+\deltta q^{\n-2}+\theta_{\n-3}&\geq\deltta q^{\n-1}-\br{\deltta^2-2\deltta}\theta_{\n-2}\\
        \iff\quad0&\geq q^{\n-1}-\br{\deltta^2-\deltta}q^{\n-2}-\br{\deltta-1}^2\theta_{\n-3}.
    \end{align*}
    Using that \(\deltta^2-\deltta<\br{\sqrt{q}-1}^2-\br{\sqrt{q}-1}\) and \(\br{\deltta-1}^2<q-1\), we get
    \[
        0>3\br{\sqrt{q}-1}q^{\n-2}+1,
    \]
    a contradiction.
\end{proof}

\subsection{The case \texorpdfstring{\(\bm{k\leq\n-2}\)}{k < n-1}}
 \label{Sec:PtK}

Due to the result of the previous subsection and the minimality of \(k\), we know that \(k\leq\n-2\).
Moreover, due to the minimality of \(\n\), combined with \Cref{res:Projection}, \(\projArg{R}{\Pi}{c}\) is a linear combination of at most \(\Deltta\) \(k\)-subspaces of \(\Pi\) for every non-incident point-hyperplane pair \(\br{R,\Pi}\), \(R\notin\supp{c}\).
Keep this observation in mind during the remainder of this subsection.

\begin{dfn}[\(k\leq\n-2\)]
    An \(i\)-space \(\iota\) is called \emph{thin} (with respect to \(c\)) in case
    \[
        \wt{\restr{c}{\iota}}\leq\Deltta\theta_{i-1}.
    \]
\end{dfn}

\begin{lem}\label{lem:ThinOrThickLines}
    Every line is either thin or thick.
\end{lem}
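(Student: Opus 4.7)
Plan. The strategy mimics the proof of the $k = n-1$ case of this lemma (\Cref{lem:ThinOrThickLinesHYP}), replacing its plane-restriction tool (\Cref{prp:PlaneBound}, which relies on \Cref{res:Restricted} and \Cref{res:SzonyiWeiner} and is unavailable for $k \leq n-2$) with a projection-based substitute coming from \Cref{res:Projection} together with the minimality of $n$. Suppose for contradiction that $\ell$ is an $m$-secant to $\supp c$ with $\Deltta + 1 \leq m \leq q - \lceil \sqrt q \rceil + 2$.

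For each plane $\pi$ containing $\ell$, pick a point $R \in \pi \setminus (\ell \cup \supp c)$ and a hyperplane $\Pi \supset \ell$ avoiding $R$; since then $\pi \not\subset \Pi$, we have $\pi \cap \Pi = \ell$. By the minimality of $n$ and \Cref{res:Projection}, the codeword $\proj R \Pi(c) \in \code{k}{n-1}{q}$ is a linear combination of at most $\Deltta$ $k$-spaces of $\Pi$, and hence by \Cref{prp:Intuition}(2), $|\supp{\proj R \Pi(c)} \cap \ell|$ is at most $\Deltta$ or at least $q - \Deltta + 2$. Since $\langle R, P \rangle \subset \pi$ for every $P \in \ell$, a direct computation gives
\[
    \proj R \Pi(c)(P) = c(P) + \sum_{Q \in \langle R, P \rangle \cap (\supp c \cap \pi \setminus \ell)} c(Q),
\]
placing the weight of $\proj R \Pi(c)$ on $\ell$ in the interval $[m - s_\pi, m + s_\pi]$, where $s_\pi \coloneqq |\supp c \cap \pi \setminus \ell|$.

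If some plane $\pi$ through $\ell$ has $s_\pi = 0$, this weight equals $m$, forcing $m \leq \Deltta$ or $m \geq q - \Deltta + 2$ — both contradicting the assumption on $m$. Hence $s_\pi \geq 1$ for every plane through $\ell$, and summing over the $\theta_{n-2}$ planes through $\ell$ yields $\wt c - m \geq \theta_{n-2}$. For $k \leq n-3$, this immediately contradicts $\wt c \leq \Deltta \theta_k \leq \Deltta \theta_{n-3} < \theta_{n-2}$ (using $\Deltta < q$).

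The main obstacle is the case $k = n-2$, where $\wt c$ can reach $\Deltta \theta_{n-2}$ and the bound $s_\pi \geq 1$ is insufficient. I would refine the argument by showing that, if some plane $\pi$ through $\ell$ has $s_\pi \leq \Deltta$, then $R$ can be chosen so that the lines $\langle R, Q \rangle$, as $Q$ ranges over the $s_\pi$ points of $\supp c \cap \pi \setminus \ell$, meet $\ell$ in $s_\pi$ distinct points of $\ell \setminus \supp c$; this forces the weight of $\proj R \Pi(c)$ on $\ell$ to equal $m + s_\pi$, which for $s_\pi \leq q - 2\Deltta$ lies in the forbidden middle range $(\Deltta, q - \Deltta + 2)$, contradicting the dichotomy. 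Hence every plane through $\ell$ must have $s_\pi \geq \Deltta + 1$, so $\wt c - m \geq (\Deltta + 1)\theta_{n-2} > \Deltta \theta_{n-2} \geq \wt c$, a contradiction. The delicate point is establishing existence of such an $R$ for every plane $\pi$, which reduces to a counting argument showing that the "bad" $R$ in $\pi$ (those for which some $\langle R, Q \rangle$ meets $\supp c \cap \ell$, or two of them agree on $\ell$) do not cover all of $\pi \setminus (\ell \cup \supp c)$.
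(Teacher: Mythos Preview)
Your overall strategy is the paper's: locate a plane $\pi$ through $\ell$ with few support points off $\ell$, choose a clever $R\in\pi\setminus\supp c$, project from $R$ into a hyperplane $\Pi\supset\ell$ not containing $\pi$, and apply the $\Deltta$/$(q-\Deltta+2)$ dichotomy on $\ell$ from \Cref{prp:Intuition}(2). The gap is in the choice of $R$ and the counting, and it is genuine.

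You ask for $R$ such that \emph{every} line $\langle R,Q\rangle$ with $Q\in\supp c\cap\pi\setminus\ell$ meets $\ell$ in a hole (and these holes are distinct). The forbidden $R$ include all points on a line joining some such $Q$ to some point of $\supp c\cap\ell$; off $\ell$ there can be as many as $s_\pi\, m\,(q-1)$ of them. When $m$ is near $q$ (say $m=q-\lfloor\sqrt q\rfloor+2$) and $s_\pi$ is around $\Deltta\approx\sqrt q$, this count is of order $q^{5/2}$, far exceeding $|\pi\setminus\ell|=q^2$, so no such $R$ need exist. Even granting such an $R$, your claim that $m+s_\pi$ lies in the forbidden interval $(\Deltta,\,q-\Deltta+2)$ fails for large $m$: with $m=q-\lfloor\sqrt q\rfloor+2$ and $s_\pi=\Deltta$ one has $m+s_\pi\geq q-\Deltta+2$ (since $2\Deltta\geq\lfloor\sqrt q\rfloor$ for $q\geq32$), and no contradiction results. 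The qualifier ``for $s_\pi\leq q-2\Deltta$'' does not help, because it is $m+s_\pi$ that must be bounded and $m$ is not controlled.

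The paper repairs both issues simultaneously by splitting on the size of $m$ and protecting only a \emph{small} subset of $\ell$. If $\Deltta+1\leq m\leq q/2$, fix a set $\mathcal P$ of $\Deltta+1$ support points on $\ell$; if $(q+1)/2\leq m\leq q-\lfloor\sqrt q\rfloor+2$, fix $\Deltta$ holes on $\ell$. One then only needs $R$ to avoid lines joining $\mathcal P$ to $\mathcal S\coloneqq\supp c\cap\pi\setminus\ell$, and since $|\mathcal P|\cdot|\mathcal S|\leq(\Deltta+1)(\Deltta-1)<q$, this excludes fewer than $q^2$ points and $R$ exists. In the first case the points of $\mathcal P$ survive projection, forcing $|\supp{\projArg R\Pi c}\cap\ell|\geq q-\Deltta+2$, contradicting the upper bound $m+|\mathcal S|\leq q/2+\Deltta-1$; in the second case the holes of $\mathcal P$ survive, forcing the count $\leq\Deltta$, contradicting the lower bound $m-|\mathcal S|\geq(q+1)/2-\Deltta+1$. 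Your separate treatment of $k\leq n-3$ is correct but unnecessary: the paper's first step (not every plane through $\ell$ has $\geq\Deltta$ points of $\supp c\setminus\ell$, else $\wt c>\Deltta\theta_{n-2}\geq\Deltta\theta_k$) already works uniformly for all $k\leq n-2$.
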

\begin{proof}
    Consider an arbitrary \(m\)-secant \(\ell\) with respect to \(\supp{c}\) and suppose, to the contrary, that \(\Deltta+1\leq m\leq q-\floor{\sqrt{q}}+2\).
    If every plane through \(\ell\) contained at least \(\Deltta\) points of \(\supp{c}\setminus\ell\), then
    \[
        \wt{c}\geq\Deltta\theta_{\n-2}+m>\Deltta\theta_k,
    \]
    a contradiction.
    Thus, there must exist a plane \(\pi\) through \(\ell\) containing at most \(\Deltta-1\) points of \(\supp{c}\) not lying in \(\ell\), forming a point set \(\mS\).
    
    First, assume that \(\Deltta+1\leq m\leq\frac{q}{2}\).
    Fix a point set \(\mP\) consisting of \(\Deltta+1\) points of \(\supp{\restr{c}{\ell}}\).
    By connecting each point of \(\mP\) with each point of \(\mS\), one obtains a set of lines that cover at most \(\size{\mP}\cdot\size{\mS}\cdot q\leq\br{\Deltta+1}\br{\Deltta-1}q<q^2\) points of \(\pi\setminus\ell\).
    Therefore, there exists a point \(R\in\pi\setminus\ell\) that does not lie on a line connecting a point of \(\mP\) with a point of \(\mS\).
    As a consequence, \(R\notin\supp{c}\).
    Choosing a hyperplane \(\Pi\) through \(\ell\) that does not contain \(\pi\).
    \Cref{prp:Intuition}(2) states that \(\ell\) contains either at most \(\Deltta\) or at least \(q-\Deltta+2\) points of \(\supp{\projArg{R}{\Pi}{c}}\).
    However, by the choice of the point \(R\) and the way \(\proj{R}{\Pi}\) is defined, we know that
    \begin{itemize}
        \item \(\mP\subseteq\supp{\projArg{R}{\Pi}{c}}\), implying, as \(\size{\mP}=\Deltta+1\), that \(\ell\) contains at least \(q-\Deltta+2\) points of \(\supp{\projArg{R}{\Pi}{c}}\), and
        \item \(\ell\) contains at most \(m+\size{\mS}\leq\frac{q}{2}+\Deltta-1\) points of \(\supp{\projArg{R}{\Pi}{c}}\).
    \end{itemize}
    This implies \(q-\Deltta+2\leq\frac{q}{2}+\Deltta-1\), a contradiction.

    The case \(\frac{q+1}{2}\leq m\leq q-\floor{\sqrt{q}}+2\) is similar by choosing a point set \(\mP\) consisting of \(\Deltta\) holes of \(\ell\) with respect to \(c\) and proving that \(\ell\) contains at most \(\Deltta\) but also at least \(\frac{q+1}{2}-\size{\mS}\geq\frac{q+1}{2}-\Deltta+1\) points of \(\supp{\projArg{R}{\Pi}{c}}\).
\end{proof}

\begin{crl}\label{crl:ThinOrThickSubspaces}
    Every $k$-space is thin.
\end{crl}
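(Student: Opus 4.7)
The plan is to reduce this to a direct application of Theorem 3.1 (thm:WeightSpectrumSubspaces) inside each $k$-space, using Lemma 4.6 (lem:ThinOrThickLines) as the input on lines and Lemma 4.4 (lem:HalfPointTrick) to rule out the ``thick'' alternative.

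Fix a $k$-space $\kappa$. For $k = 1$, a $k$-space is a line, so Lemma 4.6 together with Lemma 4.4 immediately gives that $\kappa$ is thin. So assume $k \geq 2$ and view $\kappa$ as an ambient $\pg{k}{q}$. Consider the point set $S \coloneqq \supp{c} \cap \kappa$ inside $\kappa$. By Lemma 4.6, every line $\ell \subseteq \kappa$ is, in the ambient sense, either thin or thick with respect to $c$, that is $\size{S \cap \ell} = \wt{\restr{c}{\ell}} \leq \Delta_q$ or $\size{S \cap \ell} \geq q - \sqrt q + 3$. This is precisely the hypothesis of Theorem 3.1 with ambient projective dimension $k$, $r = 1$, and $\delta = \Delta_q$. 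The requirement $\delta \leq \sqrt q - 1$ is easily verified in both cases of the definition of $\Delta_q$ (for $h = 2$ we have $\Delta_q = \tfrac{1}{2}\sqrt q - \tfrac{7}{2} \leq \sqrt q - 1$, and for $h \geq 3$ we have $\Delta_q = \floor{\sqrt q - \tfrac{3}{2}} \leq \sqrt q - 1$), and $q \geq 32 \geq 16$ so the theorem applies.

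Theorem 3.1 then yields the dichotomy
\[
 \size{S} \leq \Delta_q \, \theta_{k-1}
 \quad \text{or} \quad
 \size{S} > \br{q - \sqrt q + \tfrac{3}{2}} \frac{q^k - 1}{q - 1} = \br{q - \sqrt q + \tfrac{3}{2}} \theta_{k-1}.
\]
The first alternative says exactly that $\kappa$ is thin, which is what we want. The second alternative gives $\wt{\restr{c}{\kappa}} > \br{q - \sqrt q + \tfrac{3}{2}} \theta_{k-1} > \br{q - \sqrt q + 1} \theta_{k-1}$, so $\kappa$ would be thick; but this is forbidden by Lemma 4.4. Hence the first alternative must hold and $\kappa$ is thin.

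There is no real obstacle here: the proof is a one-step reduction, and the only things to double-check are the numerical conditions $\Delta_q \leq \sqrt q - 1$, the strict gap $\br{q - \sqrt q + \tfrac{3}{2}} \theta_{k-1} > \br{q - \sqrt q + 1} \theta_{k-1}$ between the rich bound of Theorem 3.1 and the thickness threshold, and the small-dimension case $k = 1$ where Theorem 3.1 does not apply but the two lemmas suffice on their own.
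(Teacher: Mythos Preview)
Your proof is correct and follows exactly the paper's approach: the paper's own proof is the one-line ``This follows from \Cref{lem:ThinOrThickLines}, \Cref{thm:WeightSpectrumSubspaces} ($r=1$) and \Cref{lem:HalfPointTrick}'', and you have simply unpacked those three steps carefully, including the small-dimension check $k=1$ and the numerical verifications.
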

\begin{proof}
    This follows from \Cref{lem:ThinOrThickLines}, \Cref{thm:WeightSpectrumSubspaces} (\(r=1\)) and \Cref{lem:HalfPointTrick}.
\end{proof}

\begin{proof}[Proof of \Cref{thm:OrdinaryCase} in case $k \leq n-2$]
    Consider an \(\br{\n-k}\)-space \(\lambda\) with \(\deltta<\wt{\restr{c}{\lambda}}\leq q-\Deltta+2\).
    Then \(\wt{\restr{c}{\lambda}}\geq\Deltta+1\), so we can select a point set \(\mP\) in \(\supp{\restr{c}{\lambda}}\) of size \(\Deltta+1\).
    All lines containing at least two points of \(\mP\) cover at most \(\binom{\Deltta+1}{2}\br{q+1}<\frac{q}{2}\br{q+1}<q^2\) points of \(\lambda\setminus\supp{c}\).
    Hence, as \(\dim\brArg{\lambda}=\n-k\geq2\) and as \(\wt{\restr{c}{\lambda}}\leq q-\Deltta+2\), there must exist a point \(R\in\lambda\setminus\supp{c}\) through which each line contains at most one point of \(\mP\).
    Pick a hyperplane \(\Pi\not\ni R\).
    Then \(\projArg{R}{\Pi}{c}\) is a linear combination of at most \(\Deltta\) \(k\)-subspaces of \(\Pi\).
    Due to the choice of \(R\), at least \(\Deltta+1\) points of \(\supp{\projArg{R}{\Pi}{c}}\) lie in \(\lambda\cap\Pi\), hence at least two of these points, say \(Q_1\) and \(Q_2\), originate from the very same \(k\)-subspace of the linear combination.
    This means that the line \(\vspan{Q_1,Q_2}\) must contain at least \(q+1-\br{\Deltta-1}=q-\Deltta+2\) points of \(\supp{\projArg{R}{\Pi}{c}}\).
    By \Cref{res:Projection}(2), the plane \(\vspan{Q_1,Q_2,R}\subseteq\lambda\) must contain at least \(q-\Deltta+2\) points of \(\supp{c}\).
    
    We conclude that every \(\br{\n-k}\)-space contains either at most \(\deltta\) or at least \(q-\Deltta+2\) points of \(\supp{c}\).
    By \Cref{thm:WeightSpectrumSubspaces}, either \(\wt{c}\leq\deltta\theta_k\) or \(\wt{c}>\br{q-\sqrt{q}+1}\frac{q^\n-1}{q^{\n-k}-1}\).
    The latter implies that
    \begin{align*}
        \sqrt{q}\theta_k>\br{q-\sqrt{q}}\frac{q^\n-1}{q^{\n-k}-1}&&\Longleftrightarrow&&\frac{q^{\n+1}-q^{\n-k}-q^{k+1}+1}{q^{\n+1}-q^\n-q+1}>{\sqrt{q}-1}\\
        &&\Longrightarrow&&\frac{q^{\n+1}-2q^2+1}{q^{\n+1}-q^\n-q+1}>{\sqrt{q}-1}\\
        &&\Longleftrightarrow&&\frac{q}{q-1}-\frac{2q+1}{q^\n-1}>{\sqrt{q}-1},
    \end{align*}
    a contradiction.
    Thus, \(\wt{c}\leq\deltta\theta_k\).
    
    \parag
    Now suppose that \(\lambda\) is an \(\br{\n-k}\)-space containing precisely \(\deltta\) points of \(\supp{c}\).
    Define \(\mP\coloneqq\supp{\restr{c}{\lambda}}\), hence \(\size{\mP}=\deltta\).
    For any set $\mS$ of $x$ points in a projective space, there are at most $x + \binom x 2 (q-1)$ points lying on a line connecting two points of $\mS$.
    If $x \leq \sqrt q$, then $x + \binom x 2 (q-1) < \theta_2$.
    Since $\deltta \leq \sqrt q - 1$, there exists a point $R_1 \in \lambda$, not lying on any line connecting two points of $\mP$.
    Moreover, there exists a point $R_2$ not lying on any line connecting two points of $\mP \cup \{R_1\}$.
    
    Take a hyperplane $\Pi$ that misses $R_1$ and $R_2$.
    Let $\lambda'$ denote $\lambda \cap \Pi$.
    Note that for $i\in\set{1,2}$, $c_i \coloneqq \proj {R_i} \Pi (c)$ is a codeword of $\code k {n-1} q$ with $\wt {c_i} \leq \wt c \leq \deltta \theta_k$ and hence a linear combination of at most $\deltta$ $k$-spaces.
    In addition, since no line through $R_i$ contains more than one point of $\mP$, $\lambda'$ intersects $\supp{c_i}$ in exactly $\deltta$ points.
    It follows from \Cref{prp:Intuition}(2) that $c_i$ is a linear combination of exactly $\deltta$ $k$-spaces of $\Pi$, each intersecting $\lambda'$ exactly in a point.
    Let $\mK_i$ denote the set of these $k$-spaces.
    
    Take a \(k\)-space \(\kappa_1\in\mK_1\) and a \(k\)-space \(\kappa_2\in\mK_2\).
    Then $\vspan{R_i,\kappa_i}$ intersects $\lambda$ in a line through $R_i$ and a point of $\mP$, $i\in\set{1,2}$.
    Since the line $\vspan{R_1,R_2}$ does not contain a point of $\mP$, $\vspan{R_1,\kappa_1}$ and $\vspan{R_2,\kappa_2}$ intersect $\lambda$ in distinct lines, and in particular are distinct subspaces.
    As a result, the \(\br{k+1}\)-spaces \(\vspan{R_1,\kappa_1}\) and \(\vspan{R_2,\kappa_2}\) either intersect in a subspace of dimension at most \(k-1\) (and therefore share at most \(\theta_{k-1}\) points of \(\supp{c}\)), or intersect in a \(k\)-space.
    By \Cref{crl:ThinOrThickSubspaces}, the latter \(k\)-space must be thin, so \(\vspan{R_1,\kappa_1}\) and \(\vspan{R_2,\kappa_2}\) share at most \({\Deltta}\theta_{k-1}\) points of \(\supp{c}\).
    As a consequence, we get
    \begin{align*}
        \deltta\theta_k\geq\wt{c}&\geq\wt{\projArg{R_1}{\Pi}{c}}+\wt{\projArg{R_2}{\Pi}{c}}-\size{\mK_1}\cdot\size{\mK_2}\cdot{\Deltta}\theta_{k-1}\\
        &\geq2\cdot\deltta\br{\theta_k-\br{\deltta-1}\theta_{k-1}}-\deltta^2{\Deltta}\theta_{k-1}\\
        &=\deltta\theta_k+\br{q-2\br{\deltta-1}-\deltta{\Deltta}}\deltta\theta_{k-1}+\deltta.
    \end{align*}
    This implies that \(q-2\br{\deltta-1}-\deltta{\Deltta}<0\), or, in other words, that
    \[
        q<2\br{\deltta-1}+\deltta{\Deltta}\leq2\br{\sqrt{q}-1-1}+\br{\sqrt{q}-1}^2=q-3,
    \]
    a contradiction.
\end{proof}

\section{Codes of \texorpdfstring{\(\bm{j}\)}{j}- and \texorpdfstring{\(\bm{k}\)}{k}-spaces}
 \label{Sec:j&k}

In this section, we finish the proof of \Cref{thm:GeneralCase}.
The proof works by induction on $j$ and is very similar to the proof of our previous paper \cite[\S 6]{AdriaensenDenaux}.
Essentially, we only need to improve the lower bound of Step 1 in the proof of \cite[Theorem 6.7]{AdriaensenDenaux}.
Let us introduce the proper notation.

\begin{dfn}
\begin{enumerate}
    \item For each integer \(i\), \(0\leq i<j\), and for each $v \in \FF p^{\mG_j(n,q)}$, define $\la_i(v) \in \FF p^{\mG_i(n,q)}$ as
\[
 \la_i(v): \mG_i(n,q) \to \FF p:
 \iota \mapsto \sum_{\substack{\lambda \in \mG_j \\ \iota \subset \lambda}} v(\lambda).
\]
This means that the value of an $i$-space $\iota$ with respect to $\la_i(v)$ is the sum of the values with respect to $v$ of all $j$-spaces $\lambda$ through $\iota$.
We will denote $\la_0$ by $\la$.
    \item Define $\mK_{j,k}(n,q) \coloneqq \ker(\la_{j-1}) \cap \mC_{j,k}(n,q)$.
\end{enumerate}
\end{dfn}

We want to prove a good lower bound on the minimum weight of $\mK_{j,k}(n,q)$.
This will be done by induction on $n$.
We recall the most important properties of $\la_i$.

\begin{res}[{\cite[\S 6]{AdriaensenDenaux}}] \label{Res:La}
 Let \(i\) be an integer, \(0\leq i<j\).
 \begin{enumerate}
  \item $\la_i$ is a linear map.
  \item $\la_i$ maps $\code {j,k} n q$ to $\code {i,k} n q$, and more specifically maps $\kappa^{(j)}$ to $\kappa^{(i)}$.
  \item $\la \circ \la_i = \la$.
  \item If $c \in \mK_{j,k}(n,q)$ and $P \in \suppp 0 c$, then there are at least $2 \frac{q^{k-1}}{\theta_{j-1}} \gauss {k-1} {j-1}$ $j$-spaces of $\supp c$ incident with $P$.
  \item If $c \in \code {j,k} n q$ and $\iota \in \suppp {j-1} c$, then there are at least $\theta_{k-j}$ $j$-spaces of $\supp c$ incident with $\iota$.
 \end{enumerate}
\end{res}

We now establish a lower bound on the minimum weight of $\mK_{j,k}(n,q)$ in the base case $n = k+1$.

\begin{lem}
 \label{LmMinWtKer}
 Suppose that $q \geq 8$ and $j > 0$.
 If $c \in \mK_{j,k}(k+1,q) \setminus \{ \zero \}$, then
 \[
  \wt c > \frac 12 q \gauss{k+1}{j+1}.
 \]
\end{lem}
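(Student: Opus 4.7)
The plan is to argue by induction on $j$, strengthening the inductive hypothesis to include the auxiliary bound $|\suppp 0 c| > \theta_k$ alongside the weight bound; this auxiliary bound propagates cleanly through the induction, and the weight bound follows from it via an elementary double count.

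For the inductive step $j \geq 2$, I would fix a point $P \in \suppp 0 c$ and identify the quotient $\pg{k+1}{q}/P$ with $\pg k q$, then define the quotient codeword $c_P \in \FF{p}^{\mG_{j-1}(k,q)}$ by $c_P(\mu) \coloneqq c(\langle P, \mu \rangle)$ for each $(j-1)$-space $\mu$ of $\pg k q$. Since the hyperplanes of $\pg{k+1}{q}$ through $P$ correspond bijectively to the hyperplanes of $\pg k q$, the representation $c = \sum_\kappa a_\kappa \kappa^{(j)}$ immediately gives $c_P \in \mC_{j-1,k-1}(k,q)$; and summing $c_P$ over the $(j-1)$-spaces of $\pg k q$ through a $(j-2)$-space $\iota$ mirrors the sum of $c$ over the $j$-spaces of $\pg{k+1}{q}$ through $\langle P, \iota \rangle$, which vanishes by $\la_{j-1}(c) = 0$. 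Hence $c_P \in \mK_{j-1,k-1}(k,q) \setminus \{\zero\}$, and the induction hypothesis yields both $\wt{c_P} > \tfrac{1}{2} q \gauss k j$ and $|\suppp 0 {c_P}| > \theta_{k-1}$.

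Distinct lines through $P$ meet only at $P$, so the points of $\bigcup_{\lambda \in S_P} \lambda$ (where $S_P$ denotes the set of $j$-spaces in $\supp c$ through $P$) consist of $P$ together with exactly $q$ points of $\pg{k+1}{q}$ lying above each point of $\suppp 0 {c_P}$; this gives $|\suppp 0 c| \geq 1 + q \cdot |\suppp 0 {c_P}| > 1 + q \theta_{k-1} = \theta_k$ and thereby propagates the auxiliary bound. Combining $|S_P| = \wt{c_P} > \tfrac{1}{2} q \gauss k j$ with the double count $\theta_j \wt c = \sum_{P \in \suppp 0 c} |S_P|$ and the identity $\theta_j \gauss{k+1}{j+1} = \theta_k \gauss k j$ (which follows from $\gauss{k+1}{j+1}/\gauss k j = (q^{k+1}-1)/(q^{j+1}-1)$) then delivers the target $\wt c > \tfrac{1}{2} q \gauss{k+1}{j+1}$.

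The main obstacle I anticipate is the base case $j = 1$, which lies outside the quotient scheme since $\mK_{0,k-1}(k,q)$ is not defined. For this case, \Cref{Res:La}(4) directly yields $|S_P| \geq 2q^{k-1}$ for every $P \in \suppp 0 c$, and the same observation that distinct lines through $P$ meet only at $P$ still gives $|\suppp 0 c| \geq 1 + 2q^k > \theta_k$; however, the direct double count produces a bound of order $q^{2k-2}$, short of the target by roughly a factor of $q/4$ when $q \geq 8$, so the base case will require a sharper argument---perhaps combining \Cref{Res:La}(4) with \Cref{Res:La}(5), or iterating the per-point estimate along a chain of lines in $\supp c$ to tighten the count.
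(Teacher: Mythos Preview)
Your inductive step for $j \geq 2$ is correct: the quotient codeword $c_P$ does lie in $\mK_{j-1,k-1}(k,q)\setminus\{\zero\}$, the auxiliary bound $|\suppp 0 c|>\theta_k$ does propagate via $1+q\,|\suppp 0 {c_P}|$, and the identity $\theta_j\gauss{k+1}{j+1}=\theta_k\gauss k j$ closes the double count. The problem is that the whole argument rests on the base case $j=1$, which you have not proved. Your elementary bounds there are genuinely too weak: from $|S_P|\geq 2q^{k-1}$ you get $|\suppp 0 c|>1+2q^k$, and the double count $\theta_1\wt c=\sum_P|S_P|$ then yields only $\wt c\gtrsim 4q^{2k-2}$, whereas the target $\tfrac12 q\gauss{k+1}{2}$ is of order $\tfrac12 q^{2k-1}$. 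Neither \Cref{Res:La}(5) (which for $j=1$ gives only $|S_P|\geq\theta_{k-1}<2q^{k-1}$) nor ``iterating along a chain of lines'' will close a multiplicative gap of order $q$; the bound $|S_P|\geq 2q^{k-1}$ is essentially sharp, so what is missing is a much better lower bound on $|\suppp 0 c|$.

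The paper's proof supplies exactly this, and does so uniformly for all $j>0$, making the induction unnecessary. It applies the expander mixing lemma to the pair $S=\suppp 0 c$, $T=\supp c$ in the design of points and $j$-spaces of $\pg{k+1}{q}$. Writing $e(S,T)=t\theta_j=sx$ with $x$ the average number of $j$-spaces of $T$ through a point of $S$, the EML rearranges to $\bigl|1-s/\theta_{k+1}\bigr|<\sqrt{q^j\gauss k j/(x\theta_j)}$. Plugging in $x\geq 2\frac{q^{k-1}}{\theta_{j-1}}\gauss{k-1}{j-1}$ from \Cref{Res:La}(4) shows the right-hand side is at most $\sqrt{\tfrac12\,q^2/(q^2-1)}$, so $s$ is a constant fraction of $\theta_{k+1}$ rather than merely exceeding $\theta_k$. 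Feeding this back through $t=sx/\theta_j$ gives $\wt c>C_q\,q\gauss{k+1}{j+1}$ with $C_q\geq\tfrac12$ for $q\geq 8$. In short, the missing ingredient in your approach is the near-tight lower bound $|\suppp 0 c|\approx\theta_{k+1}$, and the expander mixing lemma is what delivers it.
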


\begin{proof}
 Take $c \neq \zero$ in $\mK_{j,k}(k+1,q)$.
Define $T \coloneqq \supp c$ and $S \coloneqq \suppp 0 c$.
Denote their sizes by $t \coloneqq |T| = \wt c$ and $s \coloneqq |S|$.
Define $x$ as the average number of $j$-spaces $\lambda \in T$ through a point of $S$.
Then the number of incidences between $S$ and $T$ is given by
\[
 e(S,T) = t \theta_j = s x.
\]
Now apply the expander mixing lemma (\Cref{Eq:EML}).
This tells us 
\begin{align*}
 \left| t \theta_j - \frac{\theta_j}{\theta_{k+1}} st \right| < \sqrt{{q^j \gauss k j} st} &
 \implies t \theta_j \left| 1 - \frac s {\theta_{k+1}} \right| < \sqrt{{q^j \gauss k j} \frac{t^2 \theta_j} x} \\
 & \implies \left | 1 - \frac s {\theta_{k+1}} \right| < \sqrt{\frac {q^j \gauss k j} {x \theta_j}}.
\end{align*}
Since $s \leq \theta_{k+1}$, this implies that
\begin{align*}
 1 - \frac{t \theta_j}{x \theta_{k+1}} < \sqrt{\frac {q^j \gauss k j} {x \theta_j}} 
 && \implies &&
 t > \frac{x \theta_{k+1}}{\theta_j} \left( 1 - \sqrt{\frac {q^j \gauss k j} {x \theta_j}} \right).
\end{align*}
This lower bound on $t$ is increasing in $x$.
By \Cref{Res:La}(4), $x \geq 2 \frac{q^{k-1}}{\theta_{j-1}} \gauss{k-1}{j-1}$, so we get that
\begin{align}
 \label{EqLowerBoundT}
 t > 2 \frac{q^{k-1}}{\theta_{j-1}} \gauss{k-1}{j-1} \frac{\theta_{k+1}}{\theta_j} \left( 1 - \sqrt{\frac{q^j \gauss k j} {{2 \frac{q^{k-1}}{\theta_{j-1}} \gauss{k-1}{j-1} \theta_j}}} \right).
\end{align}

The rest of the proof consists of estimations of the above expression.
First, consider the expression under the square root.
\begin{align*}
 \frac 12 \frac{q^j}{q^{k-1}} \frac{\gauss k j}{\gauss {k-1}{j-1}} \frac 1 {\frac{\theta_j}{\theta_{j-1}}}
 &= \frac 12 \frac{q^j}{q^{k-1}} \frac{q^k-1}{q^j-1} \frac{q^j-1}{q^{j+1}-1}
 = \frac 12 \frac{q^{k+j}-q^j}{q^{k+j}-q^{k-1}} \\
 &< \frac 12 \frac{q^{k+j}}{q^{k+j}-q^{k+j-2}}
 = \frac 1 2 \frac{q^2}{q^2-1}.
\end{align*}
Next, we prove that
\[
 \frac{q^{k-1}}{\theta_{j-1}} \gauss{k-1}{j-1} \frac{\theta_{k+1}}{\theta_j} > (q-1) \gauss{k+1}{j+1}.
\]
Dividing both sides by $\gauss {k-1}{j-1}$, we need to prove that
\[
 \frac{q^{k-1}(q-1)}{q^j-1} \frac{q^{k+2}-1}{q^{j+1}-1} > (q-1) \frac{q^{k+1}-1}{q^{j+1}-1} \frac{q^k-1}{q^j-1}.
\]
This is equivalent to
\[
 q^{k-1}(q^{k+2}-1) > (q^{k+1}-1)(q^k-1),
\]
which is easy to check.

Combining this with \Cref{EqLowerBoundT}, this yields
\[
 \wt c > \underbrace{2 \br{1 - \frac 1 q} \left( 1 - \sqrt{\frac 12 \frac{q^2}{q^2-1}} \right)}_{\eqqcolon C_q} q \gauss{k+1}{j+1}
 %\underbrace{\left( 2 \frac{q}{q+2} \left( 1 - \sqrt{\frac 12 \frac{q^2}{q^2-q-1}} \right) \right)}_{\coloneqq C_q} q \gauss{k+1}{j+1}.
\]
The expression $C_q$ is increasing in $q$.
Since we assumed that $q \geq 8$, we get that $C_q \geq C_8 \geq \frac 12$, the last inequality being checked by computer.
\end{proof}

For the induction step, we will again make use of $\prj j R \Pi$ (see \Cref{dfn:Proj}).

\begin{lem} \label{LmProj}
 Consider a point $R$ and a hyperplane $\Pi \not \ni R$.
 \begin{enumerate}
     \item For $i<j$, $\la_i \circ \prj j R \Pi = \prj i R \Pi \circ \la_i$.
     \item $\prj j R \Pi$ maps $\mK_{j,k}(n,q)$ to $\mK_{j,k}(n-1,q)$.
 \end{enumerate}
\end{lem}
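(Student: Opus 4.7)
The plan is to prove (1) by a direct combinatorial verification, from which (2) follows in one line. For (1) I would fix $\iota \in \mG_i(\Pi)$ and check, for every $\lambda' \in \mG_j(n,q)$, that the coefficient of $v(\lambda')$ agrees on both sides in $\FF p$. Unwinding the definitions, this coefficient equals $\#\sett{\lambda \in \mG_j(\Pi)}{\iota \subset \lambda, \ \lambda' \subset \vspan{R,\lambda}}$ on the left-hand side and $\#\sett{\iota' \in \mG_i(\vspan{R,\iota})}{\iota' \subset \lambda'}$ on the right-hand side.

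The verification naturally splits on whether $R \in \lambda'$. If $R \notin \lambda'$, then $\vspan{R,\lambda'}$ is a $(j+1)$-space, the only candidate for $\lambda$ on the left is $\vspan{R,\lambda'}\cap\Pi$, and $\vspan{R,\iota}\cap\lambda'$ has dimension at most $i$ on the right; a quick dimension argument then shows that both counts equal $1$ exactly when $\iota \subset \vspan{R,\lambda'}$, and $0$ otherwise. If $R \in \lambda'$, let $\mu \coloneqq \lambda'\cap\Pi$, a $(j-1)$-space; one then observes that $\lambda' \subset \vspan{R,\lambda}$ is equivalent to $\mu \subset \lambda$, and that $\vspan{R,\iota}\cap\lambda' = \vspan{R,\iota\cap\mu}$. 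Writing $a \coloneqq \dim(\iota\cap\mu)$, the left count becomes the number of $j$-spaces of $\Pi$ through $\vspan{\iota,\mu}$ and the right count becomes the number of $i$-subspaces of an $(a+1)$-space. These two counts agree as integers when $a \leq i-1$ (both are $0$ for $a \leq i-2$ and both are $1$ for $a = i-1$); in the remaining case $a = i$, i.e.\ $\iota \subset \mu$, they become $\theta_{n-j-1}$ and $\theta_{i+1}$ respectively, both of which reduce to $1$ in $\FF p$ since $p \mid q$.

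Part (2) is then immediate from (1). By \Cref{res:Projection}(1), $\prj j R \Pi$ sends $\mC_{j,k}(n,q)$ into $\mC_{j,k}(n-1,q)$, so for $c \in \mK_{j,k}(n,q)$ it only remains to verify that $\la_{j-1}$ annihilates $\prj j R \Pi(c)$ inside $\Pi$; applying (1) with $i = j-1$ yields $\la_{j-1}(\prj j R \Pi(c)) = \prj{j-1} R \Pi(\la_{j-1}(c)) = \prj{j-1} R \Pi(\zero) = \zero$, as required. I expect the delicate point to be the subcase $R \in \lambda'$, $\iota \subset \mu$ of (1): the two integer counts $\theta_{n-j-1}$ and $\theta_{i+1}$ are genuinely unequal, and the identity is saved only by the characteristic-$p$ reduction $\theta_m \equiv 1 \pmod p$, which holds precisely because $p \mid q$ and would fail over $\mathbb Z$.
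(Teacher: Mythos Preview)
Your proof is correct and follows essentially the same strategy as the paper: both reduce part (1) to comparing, for each $j$-space of $\pg nq$, the two incidence counts modulo $p$, relying on the fact that $\theta_m \equiv 1 \pmod p$ (and, for the paper, the analogous reduction of $\size{\mG_i(\sigma)}$). The only difference is organizational --- the paper treats both cases uniformly by reducing each count to a $0/1$ indicator and then invoking Grassmann's identity, whereas you split explicitly on whether $R \in \lambda'$ and compute the dimensions directly; part (2) is handled identically.
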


\begin{proof}
 (1) 
 Take $v \in \FF p^{\mG_j(n,q)}$.
 Choose an $i$-space $\iota$ in $\Pi$.
 Then
 \begin{align*}
  \la_i \br{ \prj j R \Pi (v) } (\iota)
  &= \sum_{\substack{\lambda' \in \mG_j(\Pi) \\ \iota \subset \lambda' }} \prj j R \Pi (v) (\lambda')
  = \sum_{\substack{\lambda' \in \mG_j(\Pi) \\ \iota \subset \lambda' }} \sum_{\lambda \in \mG_j(\vspan{R,\lambda'})} v(\lambda) \\
  &= \sum_{\lambda \in \mG_j(n,q)} v(\lambda) \underbrace{\left| \sett{\lambda' \in \mG_j(\Pi)}{\iota \subset \lambda', \, \lambda \subset \vspan{R,\lambda'}} \right|}_{\eqqcolon f_1(\lambda)}.
 \end{align*}
 On the other hand,
 \begin{align*}
  \prj i R \Pi \br{\la_i(v)}(\iota)
  & = \sum_{\iota' \in \mG_i(\vspan{R,\iota})} \la_i(v) (\iota')
  = \sum_{\iota' \in \mG_i(\vspan{R,\iota})} \sum_{\substack{\lambda \in \mG_j(n,q) \\ \iota' \subset \lambda}} v(\lambda) \\
  & = \sum_{\lambda \in \mG_j(n,q)} v(\lambda) \underbrace{|\sett{\iota' \in \mG_i(\vspan{R,\iota})}{ \iota' \subset \lambda }|}_{\eqqcolon f_2(\lambda)}.
 \end{align*}
 
 Thus, we need to prove that $f_1(\lambda) \equiv f_2(\lambda) \pmod p$, for every $j$-space $\lambda$.
 Note that 
 \[
  f_2(\lambda) = |\mG_i(\vspan{R,\iota} \cap \lambda)|
  \equiv \begin{cases}
   1 & \text{if } \dim \br{\vspan{R,\iota} \cap \lambda} \geq i, \\
   0 & \text{otherwise}
  \end{cases}
  \pmod p.
 \]
 Moreover,
 \begin{align*}
  \dim \br{\vspan{R,\iota} \cap \lambda} = 
  \begin{cases}
   \dim \br{\iota \cap \lambda} + 1 & \text{if } R \in \lambda, \\
   \dim \br{\iota \cap \lambda}     & \text{if } R \notin \lambda.
  \end{cases}
 \end{align*}
 
 On the other hand,
 \begin{align*}
  f_1(\lambda) 
  &= |\sett{\lambda' \in \mG_j(\Pi)}{\iota \subset \lambda', \vspan{R,\lambda} \cap \Pi \subseteq \lambda'}| \\
  &\equiv \begin{cases}
   1 & \text{if } \dim\br{\vspan{\iota,\vspan{R,\lambda} \cap \Pi}} \leq j, \\
   0 & \text{otherwise}
  \end{cases}
  \pmod p.
 \end{align*}
 Moreover, 
 \begin{align*}
  \dim\br{\vspan{\iota,\vspan{R,\lambda} \cap \Pi}}
  = \dim \br{\vspan{\iota,R,\lambda}} - 1
  = \begin{cases}
   \dim \br{\vspan{\iota,\lambda}} - 1 & \text{if } R \in \lambda, \\
   \dim \br{\vspan{\iota,\lambda}} & \text{if } R \notin \lambda.
  \end{cases}
 \end{align*}

 Thus, we need to prove that
 \[
  \begin{cases}
   \dim \br{\iota \cap \lambda} + 1 \geq i \iff \dim \br{\vspan{\iota,\lambda}} - 1 \leq j & \text{if } R \in \lambda, \\
   \dim \br{\iota \cap \lambda} \geq i \iff \dim \br{\vspan{\iota,\lambda}} \leq j & \text{if } R \notin \lambda.
  \end{cases}
 \]
 This follows in both cases from Grassmann's identity:
 \(
  \dim \br{\iota \cap \lambda} + \dim \br{\vspan{\iota,\lambda}} = i + j.
 \)

 \bigskip
 
 (2) It follows directly from (1) that $\prj j R \Pi$ maps $\ker(\la_{j-1})$ to $\ker(\la_{j-1})$.
 By \cref{res:Projection} (1), $\prj j R \Pi$ also maps $\code {j,k} n q$ to $\code {j,k} {n-1} q$.
 The statement follows.
\end{proof}

\begin{prp}
 \label{PropMinWtKer}
 Suppose that $q \geq 8$ and $j > 0$.
 If $c \in \mK_{j,k}(n,q) \setminus \{ \zero \}$, then
 \[
  \wt c > \frac 12 q \gauss{k+1}{j+1}.
 \]
\end{prp}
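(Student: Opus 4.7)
We proceed by induction on $n$; the base case $n = k+1$ is exactly \Cref{LmMinWtKer}. For the inductive step, fix $n \geq k+2$ and a non-zero codeword $c \in \mK_{j,k}(n,q)$. The plan is to reduce to dimension $n-1$ by producing a non-zero image $c' := \prj j R \Pi(c)$ of weight at most $\wt c$. By \Cref{LmProj}(2), any such $c'$ automatically lies in $\mK_{j,k}(n-1, q)$, and by \Cref{res:Projection}(2), the inequality $\wt{c'} \leq \wt c$ holds whenever $R \notin \suppp 0 c$. Once such $c'$ is produced, the induction hypothesis immediately yields $\wt c \geq \wt{c'} > \tfrac 12 q \gauss{k+1}{j+1}$, as desired.

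Two degenerate situations have to be disposed of. First, if $\suppp 0 c = \pg n q$, then \Cref{Res:La}(4) says that every point is incident with at least $2q^{k-1}\gauss{k-1}{j-1}/\theta_{j-1}$ elements of $\supp c$. Double-counting point--$j$-space incidences in $\supp c$ then yields
\[
 \wt c \geq \frac{2 \theta_n q^{k-1} \gauss{k-1}{j-1}}{\theta_{j-1}\, \theta_j},
\]
which a short calculation (expanding Gaussian coefficients and using $n \geq k+2$) shows already strictly exceeds $\tfrac 12 q \gauss{k+1}{j+1}$, bypassing the induction step entirely.

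The genuinely delicate case is when $\suppp 0 c \subsetneq \pg n q$ yet $\prj j R \Pi(c) = \zero$ for every $R \notin \suppp 0 c$ and every $\Pi \not\ni R$. Unpacking the definition of $\prj j R \Pi$, this is equivalent to the structural identity
\[
 \sum_{\lambda \in \mG_j(\sigma)} c(\lambda) = 0 \qquad \text{for every $(j+1)$-space $\sigma$ that meets $\pg n q \setminus \suppp 0 c$.}
\]
This is the main obstacle of the argument. I would attack it by combining this identity with the defining relation $\la_{j-1}(c) = 0$: choose a witness $\lambda_0 \in \supp c$, apply the identity to suitably chosen $(j+1)$-spaces through $\lambda_0$, and exploit the abundance of valid witness points $R \in \pg n q \setminus \suppp 0 c$ to produce a non-trivial multiple of $c(\lambda_0)$ that is forced to vanish, contradicting $c(\lambda_0) \neq 0$. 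Should characteristic-$p$ cancellations obstruct the direct derivation, the backup plan is to show that the structural identity forces $\suppp 0 c$ to contain so many points that the double-counting bound of the previous paragraph already applies.
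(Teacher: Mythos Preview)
Your overall framework---induction on $n$, reducing to $n-1$ by projecting from a point outside $\suppp 0 c$, and falling back to a direct double count when no useful projection exists---is exactly the paper's strategy. The gap is that you do not actually resolve the ``genuinely delicate case'': you state the structural identity, propose two vague attacks, and stop. Neither attack is carried out, so the proof is incomplete as written.

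The missing observation is in fact one line. Fix any $\lambda_0 \in \supp c$. Your structural identity says that for every $(j+1)$-space $\sigma$ through $\lambda_0$ that meets $\pg nq \setminus \suppp 0 c$, the sum of $c$ over $\mG_j(\sigma)$ vanishes; since $c(\lambda_0)\neq 0$, some other $j$-space of $\sigma$ lies in $\supp c$, and in particular $\sigma$ contains at least $q^j$ points of $\suppp 0 c \setminus \lambda_0$. The remaining $(j+1)$-spaces $\sigma \supset \lambda_0$ lie entirely in $\suppp 0 c$ and hence contribute $q^{j+1} \geq q^j$ such points. Counting over the $\theta_{n-j-1}$ choices of $\sigma$ gives $|\suppp 0 c| \geq \theta_j + \theta_{n-j-1} q^j > \theta_{n-1} \geq \theta_{k+1}$, and now your own double-count (which you verified for $|\suppp 0 c| = \theta_n$, but which only needs $|\suppp 0 c| \geq \theta_{k+1}$) finishes the job. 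Your ``backup plan'' is the correct one, and it is much shorter than you anticipate; the first attack via a forced vanishing multiple of $c(\lambda_0)$ is not needed and would likely run into the characteristic-$p$ obstructions you flag.

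For comparison, the paper's dichotomy is phrased a bit more directly: rather than assuming \emph{all} projections vanish and deriving the identity, it fixes $\lambda_0 \in \supp c$ and asks whether some $(j+1)$-space $\sigma \supset \lambda_0$ has simultaneously a point $R \notin \suppp 0 c$ and no other element of $\supp c$. If yes, projecting from that $R$ into a hyperplane meeting $\sigma$ in $\lambda_0$ yields $\prj j R \Pi(c)(\lambda_0) = c(\lambda_0) \neq 0$, so the image is non-zero and induction applies. If no, one is immediately in the counting situation above. This saves you the detour through the equivalence with the structural identity.
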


\begin{proof}
 We prove this proposition by induction on $n$.
 The base case $n=k+1$ was dealt with in \Cref{LmMinWtKer}.
 So assume that $n \geq k+2$ and suppose that the proposition holds for $\mK_{j,k}(n-1,q)$.
 Take a non-zero codeword $c \in \mK_{j,k}(n,q)$ and a $j$-space $\lambda \in \supp c$.
 We again denote $\suppp 0 c$ by $S$.
 
 First, suppose that there exists a $(j+1)$-space $\sigma$ through $\lambda$ that contains no other $j$-space of $\supp c$ and contains a point $R \notin S$.
 Choose a hyperplane $\Pi$ intersecting $\sigma$ in $\lambda$.
 By \Cref{LmProj}(2), $\prj j R \Pi (c)$ is a codeword of $\mK_{j,k}(n-1,q)$.
 Moreover, since $\lambda$ is the only $j$-space of $\supp c$ in $\sigma$, $\prj j R \Pi (c)(\lambda) = c(\lambda) \neq 0$.
 In particular, this means that $\prj j R \Pi (c) \neq \zero$.
 Using \Cref{res:Projection}(2) and the induction hypothesis, this implies that
 \[
  \wt c \geq \wt{\prj j R \Pi (c)} > \frac 12 q \gauss{k+1}{j+1}.
 \]

 Now suppose that every $(j+1)$-space through $\lambda$ contains either another $j$-space of $\supp c$ or contains no points outside of $S$.
 Then every $(j+1)$-space through $\lambda$ contains at least $q^j$ points of $S \setminus \lambda$.
 Therefore,
 \[
  |S| \geq \theta_j + \theta_{n-j-1} q^j > \theta_{n-1} \geq \theta_{k+1}.
 \]
 As in the proof of \Cref{LmMinWtKer}, we have that
 \[
  \wt c 
  \geq 2 \frac{q^{k-1} \gauss{k-1}{j-1}}{\theta_{j-1} \theta_j} |S|
  \geq 2 \frac{q^{k-1} \gauss{k-1}{j-1}}{\theta_{j-1} \theta_j} \theta_{k+1}.
 \]
 It suffices to check that the right-hand side of the above inequality is greater than $\frac 12 q \gauss{k+1}{j+1}$.
 This is equivalent to
 \[
  2 q^{k-1} \br{q^{k+2}-1} (q-1) > \frac 1 2 q \br{q^{k+1}-1} \br{q^k-1}.
 \]
 Since $q^{k+2}-1 > q \br{q^{k+1}-1}$, this follows from
 \[
  2 q^{k-1}(q-1) > \frac 1 2 \br{q^k-1},
 \]
 which can be easily checked to hold for every $q \geq 2$.
\end{proof}

\begin{prp} \label{Prop:InductionOnJ}
 Suppose that $q \geq 8$ and suppose that $C_q \leq \frac 1 4 q$ is a constant such that every codeword $c \in \mC_{j-1,k}(n,q)$ with $\wt c \leq C_q \gauss{k+1}j$ is a linear combination of exactly $\ceil{\frac{\wt c}{\gauss{k+1}{j}}}$ $k$-spaces.
 Then every codeword $c \in \mC_{j,k}(n,q)$ with $\wt c \leq C_q \gauss{k+1}{j+1}$ is a linear combination of exactly $\ceil{\frac{\wt c}{\gauss{k+1}{j+1}}}$ $k$-spaces.
\end{prp}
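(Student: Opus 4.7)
My plan is to descend one dimension via the map $\la_{j-1}: \mC_{j,k}(n,q) \to \mC_{j-1,k}(n,q)$ from Result~\ref{Res:La}, apply the induction hypothesis in $\mC_{j-1,k}(n,q)$ to decompose $\la_{j-1}(c)$ as a linear combination of $k$-spaces, lift that decomposition back to $\mC_{j,k}(n,q)$, and use the minimum-weight bound of Proposition~\ref{PropMinWtKer} for $\mK_{j,k}(n,q)$ to identify the lift with $c$.

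The key estimate is a sharp bound on $\wt{\la_{j-1}(c)}$. Every $(j-1)$-space in $\supp{\la_{j-1}(c)}$ lies in some $\lambda\in\supp c$, so $\supp{\la_{j-1}(c)}\subseteq\suppp{j-1}{c}$. Double counting the pairs $(\iota,\lambda)$ with $\iota\subset\lambda$, $\iota\in\suppp{j-1}{c}$, $\lambda\in\supp c$, and invoking Result~\ref{Res:La}(5), which guarantees at least $\theta_{k-j}$ such $\lambda$ through each $\iota$, yields $\size{\suppp{j-1}{c}}\cdot\theta_{k-j}\leq\wt c\cdot\theta_j$. Combined with the identity $\gauss{k+1}{j+1}/\gauss{k+1}{j}=\theta_{k-j}/\theta_j$ this rearranges to
\[
 \wt{\la_{j-1}(c)} \;\leq\; \wt c\cdot\theta_j/\theta_{k-j} \;\leq\; C_q\gauss{k+1}{j}.
\]
Hence the inductive hypothesis produces distinct $k$-spaces $\kappa_1,\ldots,\kappa_m$ and non-zero scalars $\alpha_1,\ldots,\alpha_m\in\FF p$ with $\la_{j-1}(c)=\sum_i\alpha_i\kappa_i^{(j-1)}$ and $m=\ceil{\wt{\la_{j-1}(c)}/\gauss{k+1}{j}}$.

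Set $c^*\coloneqq\sum_i\alpha_i\kappa_i^{(j)}\in\mC_{j,k}(n,q)$. By Result~\ref{Res:La}(2), $\la_{j-1}(c^*)=\la_{j-1}(c)$, so $c-c^*\in\mK_{j,k}(n,q)$. Combining the strict lower bound $(m-1)\gauss{k+1}{j}<\wt{\la_{j-1}(c)}$ with the display above yields $(m-1)\gauss{k+1}{j+1}<\wt c$; so $m\gauss{k+1}{j+1}\leq\wt c+\gauss{k+1}{j+1}-1$. Using $C_q\leq q/4$, one then verifies
\[
 \wt{c-c^*} \;\leq\; \wt c+m\gauss{k+1}{j+1} \;\leq\; \tfrac{1}{2}q\gauss{k+1}{j+1},
\]
so Proposition~\ref{PropMinWtKer} forces $c=c^*$. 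Finally, $m\leq\ceil{C_q}<\sqrt{q^{j+1}}$ for $j\geq 1$ and $q\geq 8$, so Proposition~\ref{prp:Intuition}(1) applied to $c$ gives $m=\ceil{\wt c/\gauss{k+1}{j+1}}$.

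The main obstacle is the sharpness required of the weight bound on $\wt{\la_{j-1}(c)}$: a naive estimate $\wt{\la_{j-1}(c)}\leq\wt c\cdot\theta_j$ would be off by a factor $\theta_{k-j}$, and the subsequent bound on $\wt{c-c^*}$ would then overshoot Proposition~\ref{PropMinWtKer}'s threshold $\tfrac{1}{2}q\gauss{k+1}{j+1}$. It is precisely Result~\ref{Res:La}(5) that supplies this missing factor, matching the ratio of Gaussian coefficients so that the induction step closes cleanly.
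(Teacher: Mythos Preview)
Your proof is correct and follows essentially the same route as the paper's: apply $\la_{j-1}$, bound $\wt{\la_{j-1}(c)}$ via the double count with Result~\ref{Res:La}(5), invoke the hypothesis to write $\la_{j-1}(c)$ as a short linear combination of $k$-spaces, lift and subtract to land in $\mK_{j,k}(n,q)$, and kill the difference with Proposition~\ref{PropMinWtKer}.

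One small numerical slip: your detour through $(m-1)\gauss{k+1}{j+1} < \wt c$ yields
\[
 \wt{c-c^*} \leq 2\wt c + \gauss{k+1}{j+1} - 1 \leq (2C_q+1)\gauss{k+1}{j+1} - 1,
\]
which overshoots $\tfrac12 q\gauss{k+1}{j+1}$ by $\gauss{k+1}{j+1}-1$ at $C_q = q/4$. The paper sidesteps this by using the cruder bound $m \leq C_q$ directly (treating $C_q$ as an integer, as it is in the application $C_q=\Deltta$), which gives $\wt{c-c^*} \leq \wt c + m\gauss{k+1}{j+1} \leq 2C_q\gauss{k+1}{j+1} \leq \tfrac12 q\gauss{k+1}{j+1}$ on the nose. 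Your intermediate inequality $(m-1)\gauss{k+1}{j+1} < \wt c$ is true but unnecessary here; dropping it and using $m \leq C_q$ closes the argument without the off-by-one issue.
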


\begin{proof}
Suppose that the condition of the proposition holds for $\mC_{j-1,k}(n,q)$.
Take a codeword $c \in \mC_{j,k}(n,q)$ with $\wt c \leq C_q \gauss{k+1}{j+1}$.
Consider $c' \coloneqq \la_{j-1}(c)$.
Perform a double count on
\[
 \sett{(\iota,\lambda) \in \supp{c'} \times \supp c}{\iota \subset \lambda}.
\]
If $\iota \in \supp {c'}$, then $\iota$ is incident with at least $\theta_{k-j}$ $j$-spaces of $\supp c$ by \Cref{Res:La}(5).
Hence,
\[
 \wt{c'} \leq \wt{c} \frac{\theta_{j} }{\theta_{k-j} }
 \leq C_q \frac{q^{j+1}-1}{q^{k-j+1}-1} \gauss{k+1}{j+1} = C_q \gauss{k+1}{j}.
\]
By our hypothesis, $c'$ is a linear combination of at most $C_q$ $k$-spaces, i.e.\
\[
 c' = \sum_{i=1}^{C_q} \alpha_i \kappa_i^{(j-1)},
\]
for some scalars $\alpha_i$ and $k$-spaces $\kappa_i$.
Then 
\[ 
 c' = \la_{j-1} \left( \sum_{i=1}^{C_q} \alpha_i \kappa_i^{(j)} \right)
\]
by \Cref{Res:La}(1,2).

This implies that $c'' \coloneqq c - \sum_{i=1}^{C_q} \alpha_i \kappa_i^{(j)}$ is contained in $\mK_{j,k}(n,q)$.
Since $c''$ is equal to the difference of two functions, each of weight at most $C_q \gauss{k+1}{j+1}$, we find that $\wt{c''} \leq 2 C_q \gauss{k+1}{j+1} \leq \frac 12 q \gauss{k+1}{j+1}$.
By \Cref{PropMinWtKer}, this means that $c'' = \zero$, hence $c$ is a linear combination of at most $C_q$ characteristic functions of $k$-spaces.
By \Cref{prp:Intuition}(1), $c$ is a linear combination of exactly $\ceil{\wt c / \gauss{k+1}{j+1}}$ $k$-spaces.
\end{proof}

\Cref{thm:GeneralCase} now follows immediately by inductively applying \Cref{Prop:InductionOnJ}, with \Cref{thm:OrdinaryCase} as base case.
One only needs to check that $\Deltta \leq \frac 1 4 q$, which follows directly from $\Deltta < \sqrt q$ and $q \geq 32$.

\printbibliography

\noindent \begin{tabular}{l l}
 Sam Adriaensen &
 Lins Denaux \\
 \textit{Vrije Universiteit Brussel} &
 \textit{Ghent University} \\
 Department of Mathematics &
 Department of Mathematics: Analysis, \\
 \; and Data Science 
 &  \; Logic and Discrete Mathematics \\
 Pleinlaan 2 -- Building G &
 Krijgslaan \(281\) -- Building S\(8\)\\
 \(1050\) Elsene &
 \(9000\) Ghent \\
 BELGIUM &
 BELGIUM \\
 \texttt{e-mail:} \href{mailto:sam.adriaensen@vub.be}{\texttt{sam.adriaensen@vub.be}} &
 \texttt{e-mail:} \href{mailto:lins.denaux@ugent.be}{\texttt{lins.denaux@ugent.be}} \\
 \texttt{website:} \href{https://samadriaensen.wordpress.com/}{\texttt{samadriaensen.wordpress.com}}
 & \texttt{website:} \href{https://cage.ugent.be/~ldnaux}{\texttt{cage.ugent.be/\raisebox{0.5ex}{\texttildelow}ldnaux}}
\end{tabular}

\end{document}